\documentclass[final,onefignum,onetabnum]{siamart171218}



\usepackage{lipsum}
\usepackage{amsfonts}
\usepackage{graphicx}
\usepackage{epstopdf}
\usepackage{algorithmic}
\usepackage{amsmath}
\usepackage{amssymb}
\usepackage{amsfonts}
\usepackage{dsfont}
\ifpdf
  \DeclareGraphicsExtensions{.eps,.pdf,.png,.jpg}
\else
  \DeclareGraphicsExtensions{.eps}
\fi

\usepackage{subcaption}

\newcommand{\ga}{\alpha}


\newcommand{\gl}{\lambda}
\newcommand{\go}{\omega}

\newcommand{\gO}{\Omega}

\newcommand{\cA}{\mathcal{A}}
\newcommand{\cB}{\mathcal{B}}
\newcommand{\cC}{\mathcal{C}}

\newcommand{\cE}{\mathcal{E}}
\newcommand{\cF}{\mathcal{F}}

\newcommand{\cN}{\mathcal{N}}

\newcommand{\cV}{\mathcal{V}}

\newcommand{\cX}{\mathcal{X}}


\newcommand{\N}{\mathbb{N}}

\newcommand{\R}{\mathbb{R}}

\newcommand{\OAP}{$(\Omega,\mathcal{A},P)$}

\newcommand{\Ind}{\mathds{1}}

\newcommand{\sgn}{\operatorname{sgn}}

\newcommand{\lano}{\mathrm{o}}

\newcommand{\BV}{\operatorname{BV}}
\newcommand{\TV}{\operatorname{TV}}

\newcommand{\BIGOP}[1]{\mathop{\mathchoice%
{\raise-0.22em\hbox{\huge $#1$}}%
{\raise-0.05em\hbox{\Large $#1$}}{\hbox{\large $#1$}}{#1}}}
\newcommand{\bigtimes}{\BIGOP{\times}}
\newcommand{\BIGboxplus}{\mathop{\mathchoice%
{\raise-0.35em\hbox{\huge $\boxplus$}}%
{\raise-0.15em\hbox{\Large $\boxplus$}}{\hbox{\large $\boxplus$}}{\boxplus}}}


\newsiamremark{remark}{Remark}
\newsiamremark{hypothesis}{Hypothesis}
\crefname{hypothesis}{Hypothesis}{Hypotheses}
\newsiamthm{claim}{Claim}

\headers{Load-dependent machine failures in production network models}{S.~G\"ottlich, and S.~Knapp}

\title{Load-dependent machine failures in production network models\thanks{Submitted to the editors on June 8, 2018.
\funding{Financially supported by the BMBF project ENets (05M18VMA).}}}

\author{Simone G\"ottlich\thanks{Department of Mathematics, University of Mannheim, Mannheim, Germany 
  (\email{goettlich@uni-mannheim.de}, \email{stknapp@mail.uni-mannheim.de}).}
\and Stephan Knapp\footnotemark[2]}

\usepackage{amsopn}


\ifpdf
\hypersetup{
  pdftitle={Load-dependent machine failures in production network models},
  pdfauthor={S.\ G\"ottlich, S.\ Knapp}
}
\fi




\begin{document}

\maketitle

\begin{abstract}
In this paper, a production model based on (hyperbolic) differential equations with stochastic and load-dependent machine failures is introduced. We derive the model on the base of a well-established deterministic model and show its well-posedness. 
To do so, we make use of the theory of piecewise deterministic Markov processes and fuse it with the theory of the underlying deterministic production model.
Finally, we compare the load-dependent model to the already established load-independent model and highlight the new properties in numerical examples.
\end{abstract}

\begin{keywords}
  stochastic production model, machine failures, piecewise deterministic Markov processes
\end{keywords}

\begin{AMS}
  90B30, 60J25, 35L65
\end{AMS}
\section{Introduction}
Mathematical models for production systems as well as supply chains are a recent research topic and a variety of modeling approaches are taken into account. 
Most of these models are either based on discrete event simulations \cite{ArmbrusterDegondRinghofer2006,BanksCarson} or Newton-type dynamics~\cite{Gottlich2015} resulting in a microscopic production model.
Macroscopic production models naturally arise from the microscopic production models and are based on ordinary differential equations (ODEs) \cite{GoettlichHertyRinghofer}, hyperbolic partial differential equations (PDEs) \cite{ArmbrusterDegondRinghofer2006,DApiceManzoPiccoli2009,DApiceManzoPiccoli2010,DApiceManzoPiccoli2013,Forestier2015} or a mixture of both \cite{DApiceKogutManzo2014,DApiceManzoPiccoli2012,GoettlichHertyKlar2005}. 
In \cite{ApiceGoettlichHertyBenedetto} a comprehensive overview of macroscopic production models is given. We focus on the macroscopic production network model from \cite{ApiceGoettlichHertyBenedetto,GoettlichMartinSickenberger}, where the deterministic dynamics is given by a coupled system of hyperbolic PDEs and ODEs.

In \cite{DegondRinghofer2007,GoettlichKnapp2017,GoettlichMartinSickenberger}, stochastic effects are introduced into macroscopic production models, where externally given stochastic capacity functions to model machine failures (or capacity drops) are used.
The randomness in capacities strongly influences the dynamics of the production and leads to interesting system behavior.
Up to now, the capacity of a production step is determined by, e.g., machine restrictions or the number of workers and the assumption that machine failures are independent of the production process. 
The latter assumption is quite restrictive since a high workload implies a high abrasion of machines, or stressful working conditions for individuals lead to more sickness, whereas an empty production is not affected by machine failures. 
This motivates to introduce an influence of the production on the machine failure probabilities as well, and we obtain a bidirectional relation between the deterministic production and the random machine failures.

Different to existing approaches \cite{DegondRinghofer2007,GoettlichKnapp2017,GoettlichMartinSickenberger}, where the capacities are stochastic processes inserted into the deterministic production model as capacity functions, we have to consider the deterministic production dynamics and the random capacity functions, simultaneously. 

From the mathematical point of view, we consider the production together with the capacity process as a whole stochastic process. The theory of piecewise deterministic Markov processes, which is  well developed in e.g.\ \cite{Davis1984,Jacobsen2006,Saporta2015} is the key to show well-posedness. 
In most applications of piecewise deterministic Markov processes, the deterministic dynamics is given by a system of ODEs \cite{Alkurdi2013,Davis1984,Saporta2015} or by parabolic PDEs \cite{Alkurdi2013}. In this context, the state space is ``nice'', i.e.\ a Borel space but in the context of hyperbolic conservation laws the solution space is not a Borel space in general.
Since we consider a system of coupled PDEs and ODEs, where the PDEs are of hyperbolic type, we face the difficulty that the natural space for scalar hyperbolic PDEs, i.e., the space of functions with bounded total variation forms no Borel space, and the standard theory of piecewise deterministic Markov processes with general state space fails.
Luckily, the semigroup of the deterministic production network can be extended to a semigroup on a Borel space such that we workaround this issue by using the extended semigroup to construct the stochastic process.

This manuscript is organized as follows: in section \ref{sec:ModEq}, we introduce the base deterministic production network model and extend it to a Markovian load-dependent production network model in a second step. This section is followed by section \ref{sec:NumTreat}, where the numerical treatment of the load-dependent model is introduced and applied to an example, where we highlight the differences and similarities of the load-independent and the load-dependent model numerically.

\section{Modeling Equations}\label{sec:ModEq}
This section briefly recalls the base deterministic model from \cite{ApiceGoettlichHertyBenedetto,GoettlichHertyKlar2005} in the first subsection \ref{subsec:IntroDet}, which is extended to a load-dependent stochastic production network model in the subsequent subsection \ref{subsec:LoadDepModel}.
\subsection{Brief Introduction to the Deterministic Model}
Let  $G = (\cV,\cC)$ be a directed graph consisting of a set $\cC = \{1,\dots,N\}$ of arcs, where $N \in \N$ and a non empty set of vertices $\cV$. We interpret every arc as a processor equipped with a storage or queue, respectively, in front of it. According to \cite{ApiceGoettlichHertyBenedetto}, the processor $e \in \cC$ is characterized by the length $L^e$, which is mapped on the interval $[a^e,b^e] \subset \R$. Therefore, the queue is located at $x = a^e$, directly at the corresponding vertex $s(e) \in \cV$. Additionally, a processing velocity $v^e > 0$ and a time-dependent capacity $\mu^e(t) \geq 0$ are given user-defined parameters.
As usual in graph theory, we denote by $\delta_v^-$ and $\delta_v^+$ the set of all ingoing and outgoing arcs for every vertex $v \in \cV$.
At all inflow vertices $v \in V_{\text{in}} = \{v \in \cV \colon \delta_v^-  = \emptyset\}$ a time-dependent inflow function $G^v_{\text{in}}(t)$ is prescribed and for every $v \in \cV$ with $|\delta_v^+|>0$ we have time-dependent distribution rates $A^{v,e}(t) \in [0,1], e \in \delta_v^+$ satisfying $\sum_{e \in \delta_v^+} A^{v,e} (t) = 1$.
We call the directed graph $G$ together with the described properties and parameters a \emph{production network} in the following. 
We now further specify the notation of a deterministic production network model:
\begin{definition}[Deterministic production network model]\label{def:det_model}
Let $G = (\cV,\cC)$ be a production network. The \emph{deterministic production network model} is defined by the following equations:
 \begin{align}
\partial_t\rho^e(x,t)+\partial_xf^e(t,\rho^e(x,t))&= 0 ,\label{eq:det_net_1}\\
f^e(t,\rho^e(x,t))&= \min\{v^e\rho^e(x,t),\mu^e(t)\},\label{eq:det_net_2}\\
\rho^e(x,0) &= \rho^e_0(x)\text{,}\label{eq:det_net_3}\\
v^e\rho^e(a^e,t) &= g_{\text{out}}^e(t), \label{eq:det_net_4}\\
\partial_t q^e(t) &= g^e_{\text{in}}(t)-g_{\text{out}}^e(t)\text{,}\notag\\
q^e(0) &= q_0^e \notag
\end{align}
for $x\in (a^e,b^e)$ with 
\begin{align*}
g_{\text{in}}^e(t) &= 
\begin{cases}
A^{s(e),e}(t)\sum_{\tilde e \in \delta_{s(e)}^-} f^{\tilde e}(t,\rho^{\tilde e}(b^{\tilde e},t)) &\text{ if } s(e) \notin V_{\text{in}},\\
G_{\text{in}}^{s(e)}(t) &\text{ if } s(e) \in V_{\text{in}}\text{,}
\end{cases}
\end{align*}
and 
\begin{align*}
g_{\text{out}}^e(t) &= 
\begin{cases}
\min\{g_{\text{in}}^e(t),\mu^e(t)\} &\text{ if } q^e(t) = 0,\\
\mu^e(t) &\text{ if } q^e(t)>0
\end{cases}
\end{align*}
for all $e\in \cC$, $t\in [0,T]$, $T>0$ and given initial conditions $\rho^e_0$ and $q^e_0$. 
\end{definition}

From the mathematical point of view, the deterministic production network model is a coupled system of PDEs and ODEs, where the PDEs are scalar hyperbolic conservation laws. Total variation plays a key role in the theory of hyperbolic conservation laws, and we define, as in \cite{Walter1987}, the total variation of a function $f \colon I \to \R$ on an interval $I \subset \R$ as 
\begin{align*}
\TV_{I}(f)  = \sup\left\{\sum_{i = 1}^N |f(x_i)-f(x_{i-1})| \colon  x_0 < x_1 <\cdots <x_N \in I, N \in \N\right\}.
\end{align*}

If $f \colon (a,b)\times (0,T) \to \R$ is a function of two variables, we use the Tonelli-Cesari variation (cf.~\cite{Cesari1936}), which is given by
\begin{align*}
V_{\text{TC}}(f) &= \inf\{V_{\text{T}}(g) \colon g = f \text{ almost everywhere}\} \text{ with}\\
V_{\text{T}}(g) &= \int_a^b \TV_{[0,T]}(g(x,\cdot))dx+\int_0^T \TV_{[a,b]}(g(\cdot,t))dt.
\end{align*}
We denote by $\BV((a,b))$ and $\BV((a,b)\times (0,T))$ the sets of functions $f$ with bounded total variation $\TV_{(a,b)}(f)<\infty$ and bounded variation $V_{\text{TC}}(f)<\infty$, respectively.

Since the partial differential equations of the deterministic production network model are of hyperbolic type, we need the concept of weak entropy solutions to define a network solution later on.
The following definition of a weak entropy solution is taken from \cite{Bustos1999} and adapted to the deterministic production network model.
\begin{definition}[Weak entropy solution]
A function $\rho \in \BV((a^e,b^e)\times (0,T))$ is a \emph{weak entropy solution} to \eqref{eq:det_net_1}-\eqref{eq:det_net_4} if for almost all $x \in (a^e,b^e)$ we have $\rho(x,0) = \rho_0(x)$ and for all $k \in \R$ and all $\phi \in C_0^2([a^e,b^e) \times (0,T))$ with $\phi \geq 0$, it holds
\begin{align*}
&\; \int_0^T \int_{a^e}^{b^e} |\rho(x,t)-k| \phi_t(x,t)+\sgn(\rho(x,t)-k)|f^e(t,\rho(x,t))-f^e(t,k)| \phi_x(x,t) dx dt \\
&\;\geq -\int_0^T \sgn\left(\frac{g_{\text{out}}^e(t)}{v^e}-k\right)|f^e(t,\lim_{\substack{x \searrow a^e \\ x \notin B}} \rho(x,t))-f^e(t,k)| \phi(a^e,t)dt,
\end{align*}
where $B$ is a set of measure zero, $\sgn(x)$ is the sign function, and $C_0^2([a^e,b^e) \times (0,T))$ is the set of all twice continuously differentiable functions with compact support in $[a^e,b^e) \times (0,T)$.
\end{definition}

Thus, we are able to define a solution of the production network model as follows: 
\begin{definition}[Network solution]\label{def:ProdNetSol}
$\cX(t) = (\rho^1(t),q^1(t),\dots,\rho^N(t),q^N(t))$ is a \emph{network solution} of the deterministic production network model from definition \ref{def:det_model} if for every $e = 1, \dots,N$ the functions $\rho^e$ are weak entropy solutions to \eqref{eq:det_net_1}-\eqref{eq:det_net_4} and 
\begin{align*}
q^e(t) = q^e_0 + \int_0^t (g_{\text{in}}^e(s)-g_{\text{out}}^e(s)) ds
\end{align*}
holds for every $t \in [0,T]$.
\end{definition}

Since a weak entropy solution is of bounded variation, we expect conditions on the initial and boundary values to guarantee the well-posedness of a network solution, as the following theorem \ref{thm:ExistenceUniquenessDetProd} collected from \cite{ApiceGoettlichHertyBenedetto} shows.
\begin{theorem}\label{thm:ExistenceUniquenessDetProd}
Let $G = (\cV,\cC)$ be a production network with time-independent capacities and $\TV_{(0,T)}(G_{\text{\emph{in}}}^v)<\infty$ for every $v \in V_{\text{\emph{in}}}$. Then, there exists a semigroup $(S_t, t\in [0,T])$ on 
\[D = \left\{(\rho^1,q^1,\dots,\rho^N,q^N) \colon \rho^e \in L^1((a^e,b^e)), q^e \in \R_{\geq 0}, e = 1,\dots,N \right\}\] equipped with the norm
\[\|(\rho^1,q^1,\dots,\rho^N,q^N)\| = \sum_{e = 1}^N \|\rho^e\|_{L^1((a^e,b^e))}+|q^e|,\]
which satisfies
\begin{enumerate}
\item $S_{s+t}u = S_s(S_t u)$ and $S_0u = u$ for every $u \in D$,
\item the mapping $t \mapsto S_t u$ is continuous,
\item for every $t \in [0,T]$, the operator $S_t$ satisfies $\|S_t u-S_t \tilde{u}\| \leq  \|u-\tilde{u}\|$
for all $u, \tilde{u} \in D$,
\item for every \[u \in \tilde{D} = \{(\rho^1,q^1,\dots,\rho^N,q^N) \in D \colon \TV_{(a^e,b^e)}(\rho^e) <\infty, e = 1, \dots, N\},\] 
the function $t \mapsto S_t u$ is a unique network solution,
\item for every $u \in \tilde{D}$ there exists $\tilde{L}>0$ such that $\|S_t u- S_s u\| \leq \tilde{L} |t-s|$
for every $s,t \in [0,T]$, 
\item $S_t u \in \tilde{D}$ for every $u \in \tilde{D}$.
\end{enumerate}
\end{theorem}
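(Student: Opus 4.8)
The plan is to decouple the network dynamics into a family of single-arc scalar conservation laws with prescribed inflow flux, which interact only through the junction relations and the scalar queue ODEs, and to build the semigroup $(S_t)$ by marching forward in time with a step governed by the finite propagation speed on each arc.

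I would begin with the structural reductions coming from the hypotheses. Because the capacities $\mu^e$ are time-independent, each flux $f^e(\rho)=\min\{v^e\rho,\mu^e\}$ is a fixed Lipschitz, concave and \emph{non-decreasing} piecewise-linear function, so no wave travels to the left and the arc dynamics is causal in the direction of the arc; moreover the prescribed boundary value $v^e\rho^e(a^e,t)=g_{\text{out}}^e(t)$ satisfies $g_{\text{out}}^e\le\mu^e$ and hence always lies on the increasing branch of $f^e$, so the inflow boundary is non-characteristic and the boundary condition is attained in the strong $L^1$-trace sense without a boundary layer. For a single arc this is covered by the classical well-posedness theory for a scalar conservation law on a bounded interval with $\BV$ flux-boundary data (Bardos-LeRoux-N\'ed\'elec, in exactly the entropy formulation used in the definition of a weak entropy solution above): for $\rho_0^e\in L^1$ and $g_{\text{out}}^e\in L^1$ there is a unique weak entropy solution, it depends $L^1$-contractively on $(\rho_0^e,g_{\text{out}}^e)$, its traces at $a^e$ and $b^e$ exist, and — thanks to the monotonicity of $f^e$ — both $\TV_{(a^e,b^e)}(\rho^e(\cdot,t))$ and $\TV_{(0,T)}(f^e(\rho^e(b^e,\cdot)))$ are bounded in terms of $\TV_{(a^e,b^e)}(\rho_0^e)+\TV_{(0,T)}(g_{\text{out}}^e)$.

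Next I would assemble the network solution by time-marching with step $\tau=\min_{e}(b^e-a^e)/v^e>0$. On $[0,\tau]$ the right-endpoint trace $\rho^e(b^e,\cdot)$ of every arc depends, by finite speed of propagation, only on $\rho_0^e$, so every influx $g_{\text{in}}^e$ is already determined on $[0,\tau]$ (a convex combination through the rates $A^{s(e),e}$ of the known upstream endpoint fluxes, or the prescribed $G_{\text{in}}^{s(e)}$), and there is no circular dependence among the arcs within a step. Given $g_{\text{in}}^e$, the pair $(q^e,g_{\text{out}}^e)$ solves a scalar reflected ODE — $g_{\text{out}}^e=\mu^e$ where $q^e>0$ and $g_{\text{out}}^e=\min\{g_{\text{in}}^e,\mu^e\}$ where $q^e=0$, a one-sided complementarity condition at $q^e=0$ whose monotonicity (Filippov/Skorokhod) gives a unique Lipschitz $q^e\ge 0$ and a well-defined $g_{\text{out}}^e\in L^\infty$ — after which each arc's conservation law is solved with boundary flux $g_{\text{out}}^e$; this defines $S_t u$ on $[0,\tau]$, and iterating over $[\tau,2\tau],[2\tau,3\tau],\dots$ reaches $T$ in finitely many steps. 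Uniqueness at each step yields the semigroup property (1); $L^1$-continuity of the single-arc solutions together with $\dot q^e\in L^\infty$ (since $g_{\text{out}}^e\le\mu^e$ and $g_{\text{in}}^e$ is bounded in terms of the $\mu^{\tilde e}$ and $\sup G_{\text{in}}^v$, the latter finite by the $\BV$ hypothesis) gives (2) and the time-Lipschitz estimate (5); propagating the single-arc $\BV$ bounds across the junctions — using the time-independence of the $\mu^e$, the finite variation of the $G_{\text{in}}^v$, and the regularity of the distribution rates — keeps every $S_t u$ in $\tilde D$, which is (6), and keeps each $g_{\text{out}}^e$ of bounded variation in time, so that the single-arc theory produces on each arc a weak entropy solution in the sense of the definition while uniqueness of the network solution follows from (3); this is (4).

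The one estimate that requires genuine work, and where I expect the main obstacle, is the contraction (3) with constant exactly $1$. On each arc Kruzhkov's doubling of variables (keeping the inflow-boundary term) gives $\|\rho^e(t)-\tilde\rho^e(t)\|_{L^1}\le\|\rho_0^e-\tilde\rho_0^e\|_{L^1}+\int_0^t|g_{\text{out}}^e-\tilde g_{\text{out}}^e|\,ds$ and, more delicately, a flux-out stability bound $\int_0^t|f^e(\rho^e(b^e,s))-f^e(\tilde\rho^e(b^e,s))|\,ds\le\|\rho_0^e-\tilde\rho_0^e\|_{L^1}+\int_0^t|g_{\text{out}}^e-\tilde g_{\text{out}}^e|\,ds$; one then combines these with $|q^e(t)-\tilde q^e(t)|\le|q_0^e-\tilde q_0^e|+\int_0^t(|g_{\text{in}}^e-\tilde g_{\text{in}}^e|+|g_{\text{out}}^e-\tilde g_{\text{out}}^e|)\,ds$, the identity $\sum_{e\in\delta_v^+}A^{v,e}=1$ (which makes the total influx non-expansive at each junction), and the one-sided Lipschitz dependence of $g_{\text{out}}^e$ on $(q^e,g_{\text{in}}^e)$, and then sums over all arcs so that the space-time flux integrals cancel against the $L^1$ discrepancies and the estimate closes with constant $1$ rather than with exponential growth. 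Carrying out this cancellation on a network that may contain cycles — where the queue buffers, rather than a topological ordering of the arcs, must supply the damping — is the technical heart of the argument; everything else is an assembly of standard scalar-conservation-law facts together with the construction above, and we follow \cite{ApiceGoettlichHertyBenedetto} for the full details.
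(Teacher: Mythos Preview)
The paper does not actually prove this theorem: it records it as a result taken from \cite{ApiceGoettlichHertyBenedetto}, remarks that the proof there is based on \emph{wave front tracking}, and then observes that the contraction estimate in item~3 is what allows the solution operator, originally constructed on the BV data set $\tilde D$, to be extended to all of $D$ by taking the closure in the $L^1$-type norm (in the spirit of \cite{Bressan2000}). So the reference's architecture is: build approximate solutions out of Riemann problems on the network, pass to the limit to obtain the semigroup on $\tilde D$ together with items~3--6, and only then get items~1--3 on $D$ by density.

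Your route is genuinely different. You avoid wave front tracking altogether and instead invoke the single-arc Bardos--LeRoux--N\'ed\'elec/Kruzhkov theory, glue the arcs through the queues, and decouple possible cycles by a finite-speed time step $\tau=\min_e(b^e-a^e)/v^e$; the contraction in item~3 is then obtained by summing the per-arc $L^1$ inequalities and using the Skorokhod-type monotonicity of the queue so that flux terms telescope across junctions via $\sum_{e\in\delta_v^+}A^{v,e}=1$. This is a perfectly viable alternative for this particular model (monotone, piecewise-linear flux with a cap), and it has the advantage that the mechanism behind the constant~$1$ in item~3 is transparent. What the wave-front-tracking approach of \cite{ApiceGoettlichHertyBenedetto} buys instead is explicit control of the front interactions, which makes the TV bounds (items~5 and~6) and the specific form of the time-Lipschitz constant quoted just after the theorem fall out directly.

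One point where your sketch is looser than the cited argument: you invoke single-arc well-posedness directly for $\rho_0^e\in L^1$, but the classical BLN/Kruzhkov theory is stated for $L^\infty$ (or $L^\infty\cap\BV$) data, and the weak entropy solution in the paper's definition is required to lie in $\BV$. In the cited proof this is handled exactly by first working on $\tilde D$ and then extending to $D$ via the contraction; in your framework you would need the same density step (or an ad hoc truncation argument exploiting that $f^e\le\mu^e$) to justify the semigroup on all of $D$. Apart from that, your outline is sound and simply takes a different, more ``entropy-inequality'' path to the same endpoint.
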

We comment theorem \ref{thm:ExistenceUniquenessDetProd} briefly below. The proof can be found in \cite{ApiceGoettlichHertyBenedetto} and is based on a wave front tracking method. Furthermore, the Lipschitz continuity of the third statement allows the extension of the solution operator $S_t$ on $\tilde{D}$ to the domain $D$ by considering the closure of all functions in $\tilde{D}$ with respect to the norm given above, see also \cite{Bressan2000}.
Following the proof in \cite{ApiceGoettlichHertyBenedetto}, we deduce 
\[\|S_t u- S_s u\| \leq \left(\hat{L}+\sum_{e=1}^N \operatorname{TV}_{(a^e,b^e)}(\rho^e(\cdot,s))\right) |t-s|\]
for $u \in \tilde{D}$, $s<t$ and $\hat{L}>0$ a constant depending on the capacities, the velocities and the external inflows only. Since the Lipschitz constant in the time variable depends on the total variation of the solution, we cannot expect Lipschitz continuity for general data $u \in D$. But one can show the continuity in time, i.e.~statement two of the theorem, directly.
The following remark is essential for subsection \ref{subsec:LoadDepModel} and imposes why we patiently introduced the semigroup on $D$.
\begin{remark}\label{rem:PolishSpace}
The space $D$ with the norm given in theorem \ref{thm:ExistenceUniquenessDetProd} is a Polish space, i.e.~it is a Borel space with the $\sigma$-algebra generated by the topology induced by the norm. In detail, $L^1((a^e,b^e))$ and $\R$ are complete and separable normed vector spaces with respect to the $L^1$ and Euclidean Norm, respectively. The Cartesian product of countable many Polish spaces with the product topology is again a Polish space, see \cite{Dudley2002}. Since we have only finitely many processors, the space $D$ is a Polish space. However, the space $\tilde{D}$ is no Polish space since it is not complete with respect to the presented norm. If we apply the total variation norm, the space is even not separable. Thus, we are not able to guarantee the existence of regular conditional probabilities on $\tilde{D}$.
\end{remark}

\label{subsec:IntroDet}
\subsection{Load-dependent Model}
\label{subsec:LoadDepModel}
We introduce machine failures in the deterministic production network model by using random and piecewise-constant capacity processes to incorporate machine failures, see e.g. \cite{DegondRinghofer2007,GoettlichKnapp2017, GoettlichMartinSickenberger}.
The major benefit compared to the previous works is that we allow machine failure probabilities, which might depend on the densities and queue-lengths and hence induce a bidirectional relation between the production process and the machine failure probabilities. We consider the production dynamics coupled to the machine failures as a whole stochastic process, where the theory of piecewise deterministic Markov processes taken from \cite{Jacobsen2006} provides the essential tools.

In the following, we assume a production network with $N$ queue-processor units. To ease the notation, we reorder the set $D$ and consider
\begin{align*}
D = \R_{\geq 0}^N \times \bigtimes_{e= 1}^N L^1((a^e,b^e)).
\end{align*} 
Let $S_{uv}^\mu \colon D \to D$ be the semigroup with capacities $\mu = (\mu^1,\dots,\mu^N) \in \R_{\geq 0}^N$ from theorem \ref{thm:ExistenceUniquenessDetProd} starting from $u \in [0,T]$ and with $v \in [u,T]$. 
For every processor, we introduce the state values of the capacities as $\vec{r} = (r_1,\dots,r_N)$, and to capture the complete dynamics of the process, we set the state space
\[
E = \bigtimes_{e = 1}^N \{1,\dots,C^e\}\times D,
\]
where $C^1,\dots,C^N \in \N$ denote the possible number of capacities of processors $1,\dots,N$ here. The state space $E$ is equipped with the $\sigma$-algebra $\cE$ and to map from $\vec{r}$ to the realized capacity, we introduce the function
\begin{align*}
\mu \colon \bigtimes_{e = 1}^N \{1,\dots,C^e\} &\to \R_{\geq 0}^N\\
\vec{r} & \mapsto (\mu^1(r_1),\dots,\mu^N(r_N)).
\end{align*}

We give the following definition for the load-dependent production network model.
\begin{definition}[MLDPNM]\label{def:MLDPNM}
A \emph{Markovian load-dependent production network model} (MLDPNM) is defined via a stochastic process $X = ((\vec{r}(t),\vec{q}(t),\vec{\rho}(t)), t \in [0,T])$ on some probability space \OAP{} with values in $E$, which satisfies the following conditions:
\begin{enumerate}
\item $X(0) = x_0$ $P$-a.s. for some initial data $x_0 \in E$.
\item $X$ is a Markov process with respect to the natural filtration $\cF^X = (\cF^X_t,t \in [0,T])$.
\item There exist transition rate functions \[\gl^e_{ij} \colon [0,T] \times \R_{\geq 0}\times L^1((a^e,b^e)) \to \R_{\geq 0},\] $i,j = 1,\dots,C^e$ satisfying $\gl^e_{ii} = \sum_{j = 1, j\neq i}^{C^e} \gl^e_{ij}$ such that for every $t \in (0,T)$, $(\vec{r},\vec{q},\vec{\rho}) \in E$ holds 
\begin{align}
P(r^e(t+\Delta t) = j | X(t) = (\vec{r},\vec{q},\vec{\rho})) =&\; \big(1-\Delta t \gl^e_{r^e r^e}(t,q^e,\rho^e)\big)\Ind_{r^e}(j) \notag \\ &\;+\Delta t \gl^e_{r^ej}(t,q^e,\rho^e)(1-\Ind_{r^e}(j)) + \lano(\Delta t)
\end{align}
for every $e = 1,\dots,N$ and for every $j = 1,\dots,C^e$ as $\Delta t \to 0$.
\item There exists a capacity function $\mu \colon \bigtimes_{e = 1}^N \{1,\dots,C^e\} \to \R_{\geq 0}^N$ and a set $\cN \in \cA$ with $P(\cN) = 0$ such that for every $\go \in \cN^c$, there exist times $T_0 = 0\leq T_1 \leq \cdots \leq  T_{M}=T$ such that, for every $k = 0,\dots,M-1$, the capacity $t \mapsto \mu(\vec{r}(t,\go))$ is constant on $[T_k,T_{k+1})$ and 
\[(\vec{q}(t,\omega),\vec{\rho}(t,\omega)) = S^{\mu(\vec{r}(T_k,\go))}_{T_k t}(\vec{q}(T_k,\go),\vec{\rho}(T_k,\go))\] on $[T_k,T_{k+1})$.
\end{enumerate}
\end{definition}

We define the complete deterministic dynamics as 
\begin{align}
\phi_{st} \colon E &\to E,\notag\\
(\vec{r},\vec{q},\vec{\rho}) & \mapsto \phi_{st}(\vec{r},\vec{q},\vec{\rho}) = 
\begin{pmatrix}
\vec{r}\\
S_{st}^{\mu(\vec{r})}(\vec{q},\vec{\rho}) \label{eq:LoadDepPhiNet}
\end{pmatrix}
\end{align}
for every $0\leq s \leq t \leq T$, and we can conclude the following properties of the mapping $\phi_{st}$, which will be a crucial point for showing the existence of an MLDPNM.
\begin{lemma}\hspace{0mm}\label{lem:PropPhi}
\begin{enumerate}
\item We have for every $0 \leq s < t <u\leq T$ and for every $y \in E$ the semigroup property
\[\phi_{su}(y) = \phi_{tu}(\phi_{st}(y)).\]
\item For every $t \in [0,T]$ and $y \in E$, we have $\phi_{tt}(y) = y$.
\item For every $s \in [0,T]$ and $y \in E$, the mapping $t \mapsto \phi_{st}(y)$ is continuous.
\item The mapping $\phi \colon \{(s,t,y) \in [0,T]^2 \times E \colon s \leq t\} \to E$ is continuous and consequently measurable.
\end{enumerate}
\end{lemma}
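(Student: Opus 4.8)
\emph{Plan of proof.} The key point is that $\phi_{st}$ leaves the discrete coordinate $\vec r$ untouched and evolves the coordinate $(\vec q,\vec\rho)\in D$ by the \emph{fixed} semigroup $S^{\mu(\vec r)}$ of Theorem~\ref{thm:ExistenceUniquenessDetProd}: the capacity vector $\mu(\vec r)$ driving the flow never changes along $\phi$. Given this, statements~1--3 should transport directly from the corresponding properties of $S^\mu$. For~1 I would compute
\[
\phi_{tu}\bigl(\phi_{st}(\vec r,\vec q,\vec\rho)\bigr)
=\Bigl(\vec r,\ S^{\mu(\vec r)}_{tu}\bigl(S^{\mu(\vec r)}_{st}(\vec q,\vec\rho)\bigr)\Bigr)
=\bigl(\vec r,\ S^{\mu(\vec r)}_{su}(\vec q,\vec\rho)\bigr)
=\phi_{su}(\vec r,\vec q,\vec\rho),
\]
the middle equality being the semigroup/evolution property of $S^\mu$ (item~1 of Theorem~\ref{thm:ExistenceUniquenessDetProd}). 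Statement~2 is then $S^{\mu(\vec r)}_{tt}=\mathrm{id}$ (again item~1), and statement~3 holds because, in the product topology of $E$, the first component of $t\mapsto\phi_{st}(y)$ is constant while the second, $t\mapsto S^{\mu(\vec r)}_{st}(\vec q,\vec\rho)$, is continuous by item~2. Once statement~4 is proved, the measurability assertion is automatic, since $\cE$ is the Borel $\sigma$-algebra of the topology on $E$ (cf.\ Remark~\ref{rem:PolishSpace}).

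It thus remains to prove the joint continuity. I would take $(s_0,t_0,y_0)$ with $y_0=(\vec r_0,\vec q_0,\vec\rho_0)$, $s_0\le t_0$, and a sequence $(s_n,t_n,y_n)\to(s_0,t_0,y_0)$ in $\{(s,t,y)\in[0,T]^2\times E\colon s\le t\}$, $y_n=(\vec r_n,\vec q_n,\vec\rho_n)$. Since $\vec r$ ranges over a finite set, $\vec r_n=\vec r_0=:\vec r$ for all large $n$; writing $\mu:=\mu(\vec r)$, $w_n:=(\vec q_n,\vec\rho_n)$, $w:=(\vec q_0,\vec\rho_0)$, it suffices to show $\|S^{\mu}_{s_nt_n}w_n-S^{\mu}_{s_0t_0}w\|\to 0$. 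By the $L^1$-contractivity of $S^\mu$ (item~3 of Theorem~\ref{thm:ExistenceUniquenessDetProd}),
\[
\|S^{\mu}_{s_nt_n}w_n-S^{\mu}_{s_0t_0}w\|
\le\|w_n-w\|+\|S^{\mu}_{s_nt_n}w-S^{\mu}_{s_0t_0}w\|,
\]
and the first term tends to $0$, so I am reduced to proving that $(s,t)\mapsto S^\mu_{st}w$ is continuous at $(s_0,t_0)$ for the \emph{fixed} datum $w\in D$.

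This is the crux, and it is here that the structure of Theorem~\ref{thm:ExistenceUniquenessDetProd} matters: on all of $D$ we have only continuity in time, whereas on $\tilde D$ item~5 provides a genuine Lipschitz bound with a datum-dependent constant. So I would argue by density. Given $\gep>0$, pick $w'\in\tilde D$ with $\|w-w'\|<\gep$ — possible since $D$ is the $\|\cdot\|$-closure of $\tilde D$ — and apply contractivity twice to get
\[
\|S^\mu_{st}w-S^\mu_{s_0t_0}w\|\le 2\gep+\|S^\mu_{st}w'-S^\mu_{s_0t_0}w'\|.
\]
For the last term I would compare $S^\mu_{st}w'$ and $S^\mu_{s_0t_0}w'$ in two steps: changing the terminal time contributes at most $\tilde L(w')\,|t-t_0|$ by item~5, and changing the initial time contributes at most $\tilde L(w')\,|s-s_0|$, obtained by factoring $S^\mu_{st_0}w'=S^\mu_{s_0t_0}\bigl(S^\mu_{ss_0}w'\bigr)$ when $s\le s_0$ (symmetrically otherwise), applying contractivity, and bounding $\|S^\mu_{ss_0}w'-w'\|\le\tilde L(w')\,|s-s_0|$ again by item~5. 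For $(s,t)$ near $(s_0,t_0)$ the orderings needed for the evolution property hold automatically when $s_0<t_0$, and the case $s_0=t_0$ is immediate via $\|S^\mu_{st}w'-w'\|\le\tilde L(w')\,|t-s|$. Hence $\limsup_n\|S^\mu_{s_nt_n}w-S^\mu_{s_0t_0}w\|\le 2\gep$, and letting $\gep\downarrow 0$ closes statement~4. The main obstacle is precisely this last step: the lack of a time-Lipschitz (or even uniform time-modulus) estimate on all of $D$, which is exactly what forces the passage to the dense subset $\tilde D$ on which the wave-front-tracking bounds of Theorem~\ref{thm:ExistenceUniquenessDetProd} apply.
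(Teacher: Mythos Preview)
Your argument is correct. For statements 1--3 you do exactly what the paper does: reduce to the corresponding properties of $S^{\mu(\vec r)}$ from Theorem~\ref{thm:ExistenceUniquenessDetProd}.

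For statement~4 there is a mild difference in presentation. The paper simply asserts that joint continuity follows ``in a standard way'' from properties~1--3 of the lemma, without writing anything out. You instead give a complete argument that brings in two further ingredients from Theorem~\ref{thm:ExistenceUniquenessDetProd}: the $L^1$-contraction (item~3) to pass from $y_n$ to the fixed datum $w$, and the time-Lipschitz bound on $\tilde D$ (item~5) combined with the density of $\tilde D\subset D$ to control the $(s,t)$-dependence for general $w\in D$. Your observation that property~3 alone (continuity in the terminal time for fixed $s$) does not obviously yield continuity in the \emph{initial} time for data in $D\setminus\tilde D$ is well taken, and the $\tilde D$-approximation is a clean way to close this gap. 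So while the overall strategy is the same as the paper's, your proof of statement~4 is more explicit and arguably more careful about what is actually needed; the paper's one-line proof suppresses precisely the step you have isolated as the crux.
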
  
\begin{proof}Without loss of generality, we assume $\mu(r) = r$.
The first two statements follow directly from the semigroup property of $S_{st}^r$, i.e., $S^r_{su} = S^r_{tu}S^r_{st}$ and $S^r_{tt} = \operatorname{Id}$, the identity; see theorem \ref{thm:ExistenceUniquenessDetProd}. Additionally, we know from theorem \ref{thm:ExistenceUniquenessDetProd} that the mapping $t \mapsto S_{st}^r$ is continuous, which proves the third statement.
To prove the last statement of this lemma, one can show with properties 1., 2.\ and 3.\ the continuity of the mapping $\phi \colon \{(s,t,y) \in [0,T]^2 \times E \colon s \leq t\} \to E$ in a standard way.
\end{proof}

To keep with the notation in \cite{Jacobsen2006}, we define for every $y = (\vec{r},\vec{q},\vec{\rho}) \in E$ and $B \in \cE$ the mappings
\begin{align}
\psi(t,y) &= \sum_{e=1}^N \gl^e_{r_e r_e}(t,(q_e,\rho_e)),\label{eq:LoadDependentPsiNet}\\
\eta(t,y,B) &= \sum_{e=1}^N  \sum_{l=1,l \neq r_e}^{C^e} \frac{\gl^e_{r_e l}(t,(q_e,\rho_e))}{\psi(t,y)} \epsilon_{(r_1,\dots,r_{e-1},l,r_{e+1},\dots,r_N,\vec{q},\vec{\rho})}(B),\label{eq:LoadDependentEtaNet}
\end{align}
where \[\gl^e_{ij} \colon [0,T] \times \R_{\geq 0}\times L^1((a^e,b^e)) \to \R_{\geq 0},\] $i,j = 1,\dots,C^e$ are given rate functions, satisfying $\gl^e_{ii} = \sum_{j = 1, j\neq i}^{C^e} \gl^e_{ij}$ and $\epsilon_x$ the Dirac measure with unit mass in $x$.
Both functions $\psi$ and $\eta$ model the information about the distribution between the machine failures and the distribution of the corresponding capacity value, see theorem \ref{thm:DistributionMPPLoadDependent} later. We assume a uniform upper bound $\overline{\gl} = \sum_{e=1}^N \overline{\gl^e}$ on the rate functions, i.e.
\begin{align}
\sup\{\gl_{ij}^e(t,q,\rho) \colon i,j=1,\dots,C^e,\; t \in [0,T],\; (q,\rho) \in D\} \leq \overline{\gl^e}.\label{eq:AssumptionRates}
\end{align}
We are  able to state a so-called thinning algorithm for the MLDPNM, where we adopt some ideas of \cite{Lemaire2017}.
Let $(\xi_i,i \in \N)$ be a sequence of independent identically distributed (iid) exponentially distributed random variables with mean $(\overline{\lambda})^{-1}$ on some probability space \OAP{}, and let $(U_i, i \in \N)$ be a sequence of iid uniformly distributed random variables on $[0,1]$ on the same probability space and independent of $(\xi_i,i \in \N)$. If $T_n = t_n \in [0,T)$ and $Y_n = y_n \in E$, then algorithm \ref{alg:thinning_one_queue_proc} produces the next time of a machine failure $T_{n+1}$ and a corresponding value of the whole system $Y_{n+1}$.
\begin{algorithm}[H]
\caption{Thinning algorithm}
\label{alg:thinning_one_queue_proc}
\begin{algorithmic}
\STATE $i = 1$
\STATE $s_i = t_n+\xi_i$
\WHILE {$U_i > \psi(s_i, \phi_{t_n s_i}(y_n)) \cdot (\overline{\lambda})^{-1}$}
	\STATE $s_{i+1} = s_i + \xi_i$
	\STATE $i = i+1$
\ENDWHILE
\STATE $T_{n+1} = s_i$
\STATE Generate $Y_{n+1} \sim \eta(s_i,\phi_{t_n s_i}(y_n),\cdot)$
\end{algorithmic}
\end{algorithm}
Algorithm \ref{alg:thinning_one_queue_proc} can be interpreted as follows: starting from $Y_n$ at time $T_n$, we simulate an exponentially distributed random variable $\xi_1$ with mean $(\overline{\lambda})^{-1}$ and solve the deterministic system until the time $s_1 = t_n+\xi_1(\go)$, see figure \ref{fig:ThinningAlgo}. Then, we decide by an acceptance-rejection method whether we switch or keep the value $Y_n$ while the acceptance-rejection method accepts with probability $\psi(s_1,\phi_{t_n s_1}(y_n))$ and rejects with probability $1-\psi(s_1,\phi_{t_n s_1}(y_n))$. In figure \ref{fig:ThinningAlgo}, the time $s_1$ is rejected and $T_{n+1} = s_2$ is accepted. If a time is accepted, the new state $Y_{n+1}$ is simulated according to the probability distribution $\eta(s_i,\phi_{t_n s_i}(y_n),\cdot)$.
\begin{figure}[h]
\begin{picture}(250,90)(0,-10)
\put(50,0){\vector(1,0){250}}
\put(60,-4){\line(0,1){8}}
\put(60,-12){\makebox(5,5){$T_{n-1}$}}
\put(140,-4){\line(0,1){8}}
\put(140,-12){\makebox(5,5){$T_{n}$}}
\put(160,0){\circle{5}}
\put(157.5,-3.2){\makebox(5,5){$\bigtimes$}}
\put(160,-12){\makebox(5,5){$s_1$}}
\put(205,0){\circle{5}}
\put(205,-4){\line(0,1){8}}
\put(205,-12){\makebox(5,5){$T_{n+1}$}}
\put(300,-12){\makebox(5,5){$t$}}

\put(50,0){\vector(0,1){75}}
\put(50,80){\makebox(5,5){$\mu^e(t)$}}

\put(60,20){\circle*{5}}
\put(60,26){\makebox(5,5){$Y_{n-1}$}}
\put(60,20){\line(1,0){80}}
\put(140,60){\circle*{5}}
\put(140,66){\makebox(5,5){$Y_{n}$}}
\put(140,60){\line(1,0){65}}
\put(205,40){\circle*{5}}
\put(205,46){\makebox(5,5){$Y_{n+1}$}}

\end{picture}
\caption{Thinning algorithm example}
\label{fig:ThinningAlgo}
\end{figure}
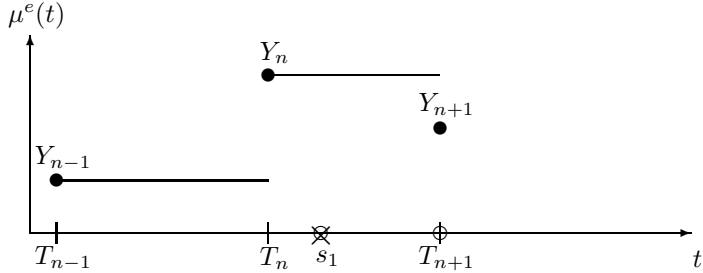

The following theorem \ref{thm:DistributionMPPLoadDependent} contains the information about the probability distribution of the times of capacity drops and the corresponding state values generated by algorithm \ref{alg:thinning_one_queue_proc}. We need this information to prove the existence and property three of a MLDPNM later on. 
Within the proof of theorem \ref{thm:DistributionMPPLoadDependent}, we need the following useful tool:
let, for some $[a,b] \subset \R$, the function $f\colon [a,b]\to \R$ be continuous and $x_0 \in [a,b]$; then, it holds that
\begin{align}
\int_{x_0}^b f(x_1) \int_{x_1}^b f(x_2) \cdots \int_{x_{m-1}}^bf(x_m) dx_m \cdots dx_1 = \frac{1}{m!}\left( \int_{x_0}^b f(z) dz\right)^m \label{lem:IterativeIntegral}
\end{align}
for every $m \in \N$. The proof is an application of the integration by parts formula.

\begin{theorem} \label{thm:DistributionMPPLoadDependent}
Assume that the sequence $((T_n,Y_n),n \in \N_0)$ is constructed with algorithm \ref{alg:thinning_one_queue_proc} and $T_0 = t_0, Y_0 = y_0$ for some $t_0 \in [0,t)$ and $y_0 \in E$. Then, for every $n \in \N$ and $t_0 < t_1<\dots t_n \leq t$, $y_1,\dots, y_n  \in E$, it holds that
\begin{align}
&P(T_{n+1} \leq t|T_n = t_n,Y_n = y_n,\dots, T_0 = t_0,Y_0 = y_0) \notag \\
= \;&1-e^{-\int_{t_n}^t \psi(\tau,\phi_{t_n \tau}(y_n)) d\tau}, \label{eq:ProbDistTn}\\[1ex]
&P(Y_{n+1} \in B|T_{n+1} = t,T_n = t_n,Y_n = y_n,\dots, T_0 = t_0,Y_0 = y_0)\notag \\ 
= \;&\eta(t,\phi_{t_n t}(y_n),B)\label{eq:ProbDistYn}
\end{align}
for every $B \in \cE$.
\end{theorem}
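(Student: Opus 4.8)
The plan is to read the inner loop of algorithm~\ref{alg:thinning_one_queue_proc} as the classical acceptance--rejection (``thinning'') construction of an inhomogeneous Poisson process on $(t_n,T]$ with intensity $\tau\mapsto\psi(\tau,\phi_{t_n\tau}(y_n))$, and then to deduce both \eqref{eq:ProbDistTn} and \eqref{eq:ProbDistYn} from one computation of the joint conditional law of $(T_{n+1},Y_{n+1})$. The combinatorial heart of that computation --- a sum over ``how many candidate times were rejected before the accepted one'' --- is exactly what the iterated--integral identity~\eqref{lem:IterativeIntegral} collapses into an exponential.

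First I would reduce the conditioning on the entire past to conditioning on $(T_n,Y_n)=(t_n,y_n)$. At stage $n$, algorithm~\ref{alg:thinning_one_queue_proc} consumes a block of the i.i.d.\ sequences $(\xi_i)$ and $(U_i)$ that is independent of $\sigma(T_0,Y_0,\dots,T_n,Y_n)$; hence the conditional law of $(T_{n+1},Y_{n+1})$ given $(T_0,Y_0,\dots,T_n,Y_n)=(t_0,y_0,\dots,t_n,y_n)$ coincides with the law of the output of algorithm~\ref{alg:thinning_one_queue_proc} started from $(t_n,y_n)$ with a fresh i.i.d.\ block. In that run, $s_i=t_n+\xi_1+\dots+\xi_i$ are the arrival times of a homogeneous Poisson process of rate $\overline{\lambda}$ on $(t_n,\infty)$, and, writing $p(\tau)=\psi(\tau,\phi_{t_n\tau}(y_n))\,\overline{\lambda}^{-1}$ --- which lies in $[0,1]$ by the uniform bound~\eqref{eq:AssumptionRates} --- the while-loop discards the candidate $s_i$ on the event $\{U_i>p(s_i)\}$ and keeps it otherwise; given the arrival times these decisions are conditionally independent with keep-probability $p(s_i)$, because the $U_i$ are i.i.d.\ uniform and independent of the $\xi_j$. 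Thus $T_{n+1}$ is the first kept arrival and $Y_{n+1}\sim\eta(T_{n+1},\phi_{t_nT_{n+1}}(y_n),\cdot)$.

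Then I would compute the joint law directly. Fix a Borel set $A\subseteq(t_n,T]$ and $B\in\cE$ and decompose on the number $m\ge0$ of candidates preceding the accepted one. The joint density of the first $m+1$ arrivals $t_n<s_1<\dots<s_{m+1}$ of the Poisson process is $\overline{\lambda}^{\,m+1}e^{-\overline{\lambda}(s_{m+1}-t_n)}$, the first $m$ are rejected with conditional probability $\prod_{i=1}^m(1-p(s_i))$, the $(m+1)$-th is accepted with conditional probability $p(s_{m+1})$, and then $Y_{n+1}$ lands in $B$ with probability $\eta(s_{m+1},\phi_{t_ns_{m+1}}(y_n),B)$; absorbing the powers of $\overline{\lambda}$ and using Tonelli (the integrand is non-negative) to interchange sum and integral gives
\begin{align*}
&P\big(T_{n+1}\in A,\,Y_{n+1}\in B\mid T_n=t_n,Y_n=y_n,\dots\big)\\
&\quad=\sum_{m=0}^\infty\int_{t_n<s_1<\dots<s_{m+1}}\Ind_A(s_{m+1})\,\psi\big(s_{m+1},\phi_{t_ns_{m+1}}(y_n)\big)\,\eta\big(s_{m+1},\phi_{t_ns_{m+1}}(y_n),B\big)\\
&\qquad\qquad\times e^{-\overline{\lambda}(s_{m+1}-t_n)}\prod_{i=1}^m\big(\overline{\lambda}-\psi(s_i,\phi_{t_ns_i}(y_n))\big)\,ds_1\cdots ds_{m+1}.
\end{align*}
For fixed $s_{m+1}=\tau$, identity~\eqref{lem:IterativeIntegral} applied to $f(z)=\overline{\lambda}-\psi(z,\phi_{t_nz}(y_n))$ evaluates $\int_{t_n<s_1<\dots<s_m<\tau}\prod_{i=1}^mf(s_i)\,ds_1\cdots ds_m$ as $\frac1{m!}\big(\overline{\lambda}(\tau-t_n)-\Lambda(\tau)\big)^m$, where $\Lambda(\tau):=\int_{t_n}^\tau\psi(z,\phi_{t_nz}(y_n))\,dz$; summing the exponential series and cancelling the factors $e^{\pm\overline{\lambda}(\tau-t_n)}$ yields
\[
P\big(T_{n+1}\in A,\,Y_{n+1}\in B\mid\dots\big)=\int_A\psi\big(\tau,\phi_{t_n\tau}(y_n)\big)\,e^{-\Lambda(\tau)}\,\eta\big(\tau,\phi_{t_n\tau}(y_n),B\big)\,d\tau .
\]
Putting $B=E$ and $A=(t_n,t]$, and using that $\Lambda$ is absolutely continuous with $\Lambda'=\psi(\cdot,\phi_{t_n\cdot}(y_n))$ almost everywhere, the right-hand side equals $1-e^{-\Lambda(t)}$, i.e.\ \eqref{eq:ProbDistTn}; and the displayed joint law identifies $B\mapsto\eta(t,\phi_{t_nt}(y_n),B)$ as a version of the regular conditional distribution of $Y_{n+1}$ given $\{T_{n+1}=t\}$ and the past, i.e.\ \eqref{eq:ProbDistYn}.

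I expect the genuine difficulties to be probabilistic rather than analytic. The main one is the reduction in the second step: one must justify carefully that the randomness fed to algorithm~\ref{alg:thinning_one_queue_proc} at stage $n$ is independent of the $\sigma$-algebra generated by $(T_0,Y_0,\dots,T_n,Y_n)$, which is what legitimises collapsing the full-history conditioning and, with it, the null-event conditioning on $\{T_{n+1}=t\}$ via the disintegration read off from the joint law (this needs $(\tau,B)\mapsto\eta(\tau,\phi_{t_n\tau}(y_n),B)$ to be a bona fide probability kernel, which is immediate from \eqref{eq:LoadDependentPsiNet}--\eqref{eq:LoadDependentEtaNet} and Lemma~\ref{lem:PropPhi}). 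A lesser point: \eqref{lem:IterativeIntegral} is stated for continuous $f$, but since $\prod_i f(s_i)$ is symmetric in $(s_1,\dots,s_m)$ the $m!$ ordered simplices tile $[t_n,\tau]^m$ up to a null set, so the identity in fact holds for every bounded measurable $f$; hence I would not require continuity of the rates $\gl^e_{ij}$, only the measurability of $\tau\mapsto\psi(\tau,\phi_{t_n\tau}(y_n))$, which follows from Lemma~\ref{lem:PropPhi}.
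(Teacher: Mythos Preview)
Your proof is correct and rests on the same two ingredients as the paper --- the decomposition over the number $m$ of rejected candidates and the iterated-integral identity~\eqref{lem:IterativeIntegral} --- but you organise the computation differently, and in a way that is arguably cleaner.

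The paper computes only the marginal $P(T_{n+1}\le t\mid\dots)$: it evaluates each term $P(s_m\le t,\,M=m)$ by rewriting the innermost integral $\int_{\tau_{m-1}}^{t-t_n}\tilde\psi(t_n+\tau_m)\,\overline{\lambda}\,e^{-\overline{\lambda}\tau_m}\,d\tau_m$ as a difference, introduces auxiliary quantities $a_k$, and then sums a telescoping series while~\eqref{lem:IterativeIntegral} handles a residual $(m-1)$-fold simplex integral. Statement~\eqref{eq:ProbDistYn} is dismissed in one line as ``direct from the last line of the algorithm.''

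You instead fix the accepted time $\tau$ first and apply~\eqref{lem:IterativeIntegral} to the $m$-fold integral over the rejected candidates in $(t_n,\tau)$; the exponential series then sums explicitly and cancels against $e^{-\overline{\lambda}(\tau-t_n)}$, yielding the conditional density $\psi(\tau,\phi_{t_n\tau}(y_n))\,e^{-\Lambda(\tau)}$ of $T_{n+1}$ (with the kernel $\eta$ attached for the joint law). This avoids the telescoping bookkeeping entirely, and --- because you compute the joint law of $(T_{n+1},Y_{n+1})$ --- it promotes~\eqref{eq:ProbDistYn} to a genuine disintegration statement rather than an informal reading of the algorithm. Your remark that~\eqref{lem:IterativeIntegral} only needs bounded measurability of $f$, via the symmetry-and-tiling argument, is also a small sharpening: the paper proves~\eqref{lem:IterativeIntegral} by integration by parts and hence uses continuity.
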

\begin{proof}
The second statement \eqref{eq:ProbDistYn} of the theorem follows directly from the last line of  algorithm \ref{alg:thinning_one_queue_proc}. We now only have to show the first statement, where we follow the ideas in \cite{YChen2006} for the inhomogeneous Poisson process.
Since the algorithm used to generate $T_{n+1}$ and $Y_{n+1}$ only needs the values of $T_n$ and $Y_n$, we have
\begin{align*}
P(T_{n+1} \leq t|T_n = t_n,Y_n = y_n,\dots, T_0 = t_0,Y_0 = y_0) = P(T_{n+1} \leq t|T_n = t_n,Y_n = y_n).
\end{align*}
By defining \[\tilde{\psi}(s)= \frac{\psi(s,\phi_{t_n s}(y_n))}{\overline{\lambda}},\quad s_i = t_n+\sum_{l=1}^i \xi_l\quad \text{ and }\quad  M = \inf\{m \in \N \colon U_m \leq \tilde{\psi}(s_m) \},\] we can write $T_{n+1} = s_M$.
Thus,
\[P(T_{n+1} \leq t|T_n = t_n,Y_n = y_n) = \sum_{m=1}^\infty P(s_m \leq t, M = m|T_n = t_n,Y_n = y_n) \]
and
\begin{align*}
&P(s_m \leq t, M = m|T_n = t_n,Y_n = y_n) \\[1ex]
= \;&P(s_m \leq t, U_1 > \tilde{\psi}(s_1),\dots,U_{m-1} > \tilde{\psi}(s_{m-1}),U_m \leq \tilde{\psi}(s_m)|T_n = t_n,Y_n = y_n) \\
= \; &\int_{\R^m} \Ind_{[0,t-t_n]}(h_m(\vec{x})) \tilde{\psi}(t_n+h_m(\vec{x})) \prod_{i=1}^{m-1} (1-\tilde{\psi}(t_n+h_i(\vec{x}))) \overline{\lambda}^m e^{-\overline{\lambda} h_m(\vec{x})} \Ind_{\R_{\geq 0}^m}(\vec{x}) d\vec{x},
\end{align*}
where we used that $\xi_1,\dots,\xi_m$ are iid exponentially distributed and $h_i(\vec{x}) = \sum_{l = 1}^i x_l$. We use the transformation $\tau_k = h_k(\vec{x})$, which implies $x_k = \tau_{k}-\tau_{k-1}$ with $\tau_0 = 0$. The last integral reads with integration by substitution, see, e.g., \cite{CasellaBerger2002},
\begin{align*}
&\int_{\R^m} \Ind_{[0,t-t_n]}(\tau_m) \tilde{\psi}(t_n+\tau_m) \prod_{i=1}^{m-1} (1-\tilde{\psi}(t_n+\tau_i)) \overline{\lambda}^m e^{-\overline{\lambda} \tau_m} \prod_{i=1}^m \Ind_{\R_{\geq 0}}(\tau_{k}-\tau_{k-1}) d\vec{\tau}\\
=&\;\int_{[0,t-t_n]^m} \tilde{\psi}(t_n+\tau_m) \prod_{i=1}^{m-1} (1-\tilde{\psi}(t_n+\tau_i)) \overline{\lambda}^m e^{-\overline{\lambda} \tau_m} \prod_{i=1}^m \Ind_{\R_{\geq 0}}(\tau_{k}-\tau_{k-1}) d\vec{\tau}\\
=&\;\int_0^{t-t_n}(1-\tilde{\psi}(t_n+\tau_1))\int_{\tau_1}^{t-t_n}(1-\tilde{\psi}(t_n+\tau_2)) 
\\ &\quad \cdots \int_{\tau_{m-1}}^{t-t_n} \tilde{\psi}(t_n+\tau_m)\overline{\lambda}^m e^{-\overline{\lambda} \tau_m} d\tau_m \cdots d\tau_1.
\end{align*}
We define for $k =1,\dots,m$
\begin{align*}
a_k =& \int_0^{t-t_n}(1-\tilde{\psi}(t_n+\tau_1))\int_{\tau_1}^{t-t_n}(1-\tilde{\psi}(t_n+\tau_2)) 
\\ &\quad \cdots \int_{\tau_{k-1}}^{t-t_n}(1-\tilde{\psi}(t_n+\tau_k))\overline{\lambda}^k e^{-\overline{\lambda} \tau_k} d\tau_k \cdots d\tau_1
\end{align*}
and  for $k = 0$, we set $a_0 = 1$. Rewriting
\begin{align*}
&\int_{\tau_{m-1}}^{t-t_n} \tilde{\psi}(t_n+\tau_m)\overline{\lambda}^m e^{-\overline{\lambda} \tau_m} d\tau_m\\
=\;&-\int_{\tau_{m-1}}^{t-t_n} (1-\tilde{\psi}(t_n+\tau_m))\overline{\lambda}^m e^{-\overline{\lambda} \tau_m} d\tau_m + \overline{\lambda}^{m-1}(e^{\overline{\lambda}\tau_{m-1}}-e^{-\overline{\lambda}(t-t_n)})
\end{align*}
implies
\begin{align*}
&P(s_m \leq t, M = m|T_n = t_n,Y_n = y_n) \\[1ex]
=\;& -a_m+a_{m-1}\\
&\;-\overline{\lambda}^{m-1}e^{-\overline{\lambda}(t-t_n)}
\int_0^{t-t_n}(1-\tilde{\psi}(t_n+\tau_1))\cdots \int_{\tau_{m-2}}^{t-t_n}(1-\tilde{\psi}(t_n+\tau_{m-1})) d\tau_{m-1} \cdots d\tau_1\\
=\;& a_{m-1}-a_m-e^{-\overline{\lambda}(t-t_n)} \frac{1}{(m-1)!}\left(\overline{\lambda} \int_0^{t-t_n}(1-\tilde{\psi}(t_n+\tau))d\tau\right)^{m-1}
\end{align*}
with \eqref{lem:IterativeIntegral}.
Summing over $m$, using the telescopic sum and the definition of the exponential function, leads to
\begin{align*}
P(T_{n+1} \leq t|T_n = t_n,Y_n = y_n) &= \sum_{m=1}^\infty P(s_m \leq t, M = m|T_n = t_n,Y_n = y_n)\\
&= a_0-\lim_{m\to \infty} a_m-e^{-\overline{\lambda}(t-t_n)}e^{ \int_0^{t-t_n}\overline{\lambda} (1-\tilde{\psi}(t_n+\tau))d\tau}\\
&= 1-0-e^{-\int_{t_n}^t\psi(\tau,\phi_{t_n \tau}(y_n)d\tau)},
\end{align*}
where we used $a_0 = 1$ and $\lim_{m\to \infty} a_m = 0$. This completes the proof.
\end{proof}

The following, with our notation and situation adapted theorem \ref{thm:PDMPJacobsen2006}, taken from \cite{Jacobsen2006}, provides the main tool to show the existence of a MLDPNM.
\begin{theorem}\label{thm:PDMPJacobsen2006}
Let 
$\phi \colon \{(s,t,y) \in [0,T]^2 \times E \colon s \leq t\} \to E$ be measurable and satisfy statements 1.-3.\ of lemma \ref{lem:PropPhi}. Assume that $(t,y) \mapsto \psi(t,y)$ is $\cB([0,T]) \otimes \cE$-measurable and that, for every $(t,y) \in [0,T] \times E$, it holds that $\int_t^{t+h} \psi(\tau, \phi_{t\tau}(y)) d\tau < \infty$ for some $h = h(t,y)>0$ sufficiently small. Let for every $t \in [0,T]$ the mapping $(y,B) \mapsto \eta(t,y,B)$ be a Markovian kernel on $(E, \cE)$, and let the function $(t,y) \mapsto \eta(t,y,B)$ $\cB([0,T]) \otimes \cE$-measurable for every $B \in \cE$ with $\eta(t,y,\{y\}) = 0$ for every $(t,y) \in [0,T] \times E$.
If $\mu = \sum_{n \in \N_0, T_n<\infty} \epsilon_{(T_n,Y_n)}$ is a stable random counting measure determined by the marked point process $((T_n,Y_n),n \in \N)$ on \OAP{} satisfying statements 1.\ and 2.\ of theorem \ref{thm:DistributionMPPLoadDependent}, then $(X(t), t \in [0,T])$ defined by
\begin{align*}
X(t) = \phi_{T_{N(t)}t}(Y_{N(t)}) \text{ with }X(0) = Y_0 = x_0 \text{ and } N(t) = \inf\{n \in \N \colon T_n \leq t < T_{n+1}\}
\end{align*}
is a piecewise deterministic Markov process on \OAP{} with respect to the natural filtration. 
\end{theorem}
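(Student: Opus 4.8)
The statement is the translation of the general recipe for building a piecewise deterministic Markov process out of a marked point process, as developed in \cite{Jacobsen2006}, into the present setting; accordingly the plan is to check that the objects $E$, $\phi$, $\psi$, $\eta$ and $((T_n,Y_n))$ fit the hypotheses of that recipe and then to quote it. The one structural fact that makes the general theory usable here is Remark~\ref{rem:PolishSpace}: since $D$ is Polish, the finite product $E=\bigtimes_{e=1}^N\{1,\dots,C^e\}\times D$ is Polish as well, hence $(E,\cE)$ is a Borel space and regular conditional probabilities on it exist. This is precisely the point at which one could \emph{not} argue on $\tilde D$ (the $\BV$ space), which is why the deterministic semigroup was extended to $D$ in Theorem~\ref{thm:ExistenceUniquenessDetProd}.

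With the Borel property secured, I would verify the remaining hypotheses one by one. The deterministic flow $\phi$ from \eqref{eq:LoadDepPhiNet} satisfies the two semigroup identities and the joint continuity established in Lemma~\ref{lem:PropPhi}, and is in particular measurable on $\{(s,t,y)\in[0,T]^2\times E:s\le t\}$ by the fourth statement of that lemma. The intensity $(t,y)\mapsto\psi(t,y)$ of \eqref{eq:LoadDependentPsiNet} is $\cB([0,T])\otimes\cE$-measurable, and it is integrable along every flow line: the uniform bound \eqref{eq:AssumptionRates} gives $\int_t^{t+h}\psi(\tau,\phi_{t\tau}(y))\,d\tau\le h\,\overline{\gl}<\infty$ for every $(t,y)$ and every $h>0$. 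Finally, for each $t$ the map $(y,B)\mapsto\eta(t,y,B)$ of \eqref{eq:LoadDependentEtaNet} is a Markov kernel on $(E,\cE)$, jointly measurable in $(t,y)$, and satisfies $\eta(t,y,\{y\})=0$ because every atom of $\eta(t,y,\cdot)$ changes exactly one coordinate $r_e$ into a value $l\neq r_e$ and therefore differs from $y$.

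It then remains to recognise the marked point process as the one prescribed by the construction. By assumption the counting measure $\mu=\sum_{n:T_n<\infty}\epsilon_{(T_n,Y_n)}$ is stable, so $N(t)<\infty$ $P$-a.s.\ for every $t\in[0,T]$ and $X(t)=\phi_{T_{N(t)}t}(Y_{N(t)})$ is well defined on all of $[0,T]$. Theorem~\ref{thm:DistributionMPPLoadDependent} identifies the conditional law of $(T_{n+1},Y_{n+1})$ given the past: the conditional survivor function of $T_{n+1}$ is $t\mapsto \exp\bigl(-\int_{T_n}^t\psi(\tau,\phi_{T_n\tau}(Y_n))\,d\tau\bigr)$ --- equivalently, the conditional jump intensity of $\mu$ at time $t$ is $\psi(t,X(t-))$ with the pre-jump state $X(t-)=\phi_{T_n t}(Y_n)$ on $\{T_n<t\le T_{n+1}\}$ --- and, given a jump at time $t$, the new mark has law $\eta(t,\phi_{T_n t}(Y_n),\cdot)=\eta(t,X(t-),\cdot)$; in both cases the dependence on the past is only through $X(t-)$. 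These are exactly the two inputs of the PDMP construction in \cite{Jacobsen2006}, so that theorem applies and yields that $X$ is a piecewise deterministic Markov process with respect to the natural filtration of $\mu$. Since $X$ is adapted to that filtration while, conversely, the a.s.\ distinct jump times are the discontinuities of $X$ and $Y_n=\phi_{T_nT_n}(Y_n)=X(T_n)$, the natural filtrations of $\mu$ and of $X$ agree, which gives the Markov property with respect to $\cF^X$.

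The bulk of the work is bookkeeping rather than analysis; the one place where genuine content enters --- and the main obstacle in the sense that the whole theorem hinges on it --- is that the conditional intensity and the jump kernel must be functions of the \emph{current state} $X(t-)$ alone and not of the entire history, and it is Theorem~\ref{thm:DistributionMPPLoadDependent} (itself relying on the semigroup structure of $\phi$ from Lemma~\ref{lem:PropPhi}) that supplies this reduction. Everything else is a matching of notation between \cite{Jacobsen2006} and Definition~\ref{def:MLDPNM} together with \eqref{eq:LoadDepPhiNet}.
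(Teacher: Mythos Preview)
The paper provides no proof of this theorem: it is explicitly introduced as ``taken from \cite{Jacobsen2006}'' with the notation adapted, and the proof is left to that reference. Your opening sentence correctly recognises this, and in that sense your proposal and the paper agree.

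Most of your proposal, however, is misplaced. You devote the second paragraph to verifying that the \emph{specific} objects $\phi$, $\psi$, $\eta$ of \eqref{eq:LoadDepPhiNet}--\eqref{eq:LoadDependentEtaNet} are measurable, integrable along flow lines, form a Markov kernel, and satisfy $\eta(t,y,\{y\})=0$. But in Theorem~\ref{thm:PDMPJacobsen2006} these are \emph{hypotheses}, not conclusions: the theorem is stated for an abstract $\phi$, $\psi$, $\eta$, and the passages beginning ``Let $\phi$\dots'', ``Assume that $(t,y)\mapsto\psi(t,y)$\dots'', ``Let for every $t$\dots'' already assume exactly what you are checking. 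What you have actually written is a fragment of the proof of Theorem~\ref{thm:ExistenceLoadDependentNetwork}, which is precisely where the paper verifies that its concrete model meets the hypotheses of Theorem~\ref{thm:PDMPJacobsen2006}; compare your second paragraph line by line to the first half of that proof.

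The only content your proposal contributes beyond citing \cite{Jacobsen2006} is the final paragraph's remark that the natural filtrations of $\mu$ and of $X$ coincide. That is a valid observation, but it is not a proof of the Markov property, which is the substance of the cited result. If you intended to prove Theorem~\ref{thm:PDMPJacobsen2006} rather than quote it, you would need to show---for instance via the compensator of $\mu$ and the regeneration structure of the conditional laws \eqref{eq:ProbDistTn}--\eqref{eq:ProbDistYn}---that $\E[g(X(t))\mid\cF^X_s]$ depends on the past only through $X(s)$; that is what Jacobsen carries out and what your proposal does not attempt.
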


The next theorem \ref{thm:ExistenceLoadDependentNetwork} addresses the question of the existence of an MLDPNM.
\begin{theorem}\label{thm:ExistenceLoadDependentNetwork}
Let a production network be given. Additionally, we have $$C^1,\dots,C^N \in \N,\quad \mu \colon \bigtimes_{e=1}^N\{1,\dots,C^e\} \to \R_{\geq 0}^N$$ and continuous transition rate functions \[\gl^e_{ij} \colon [0,T] \times \R_{\geq 0}\times L^1((a^e,b^e)) \to \R_{\geq 0},\]
 $i,j = 1,\dots,C^e$, $e \in \cC$ satisfying $\gl^e_{ii} = \sum_{j = 1, j\neq i}^{C^e} \gl^e_{ij}$ and \eqref{eq:AssumptionRates}. Then, there exists for all initial data $(r_0,q_0,\rho_0) \in E$ an MLDPNM.
\end{theorem}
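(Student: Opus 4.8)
The strategy is to realise the model as a piecewise deterministic Markov process driven by the flow $\phi_{st}$ of \eqref{eq:LoadDepPhiNet}, the jump-rate function $\psi$ of \eqref{eq:LoadDependentPsiNet}, and the transition kernel $\eta$ of \eqref{eq:LoadDependentEtaNet}. Concretely, I would first check all hypotheses of Theorem \ref{thm:PDMPJacobsen2006}, then construct the underlying marked point process by iterating the thinning algorithm \ref{alg:thinning_one_queue_proc}, and finally verify that the resulting process meets the four conditions of Definition \ref{def:MLDPNM}.

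First I would assemble the ingredients of Theorem \ref{thm:PDMPJacobsen2006}. The flow $\phi$ is measurable and satisfies statements 1--3 of Lemma \ref{lem:PropPhi}. The function $\psi$ is $\cB([0,T])\otimes\cE$-measurable because on each of the finitely many discrete sheets $\{\vec{r}=\mathrm{const}\}$ it equals $\sum_{e}\gl^e_{r_e r_e}(t,(q_e,\rho_e))$, which is continuous by hypothesis; the local integrability $\int_t^{t+h}\psi(\tau,\phi_{t\tau}(y))\,d\tau<\infty$ is then automatic, since $\psi\le\overline{\gl}$ on all of $[0,T]\times E$ by \eqref{eq:AssumptionRates} and $\tau\mapsto\psi(\tau,\phi_{t\tau}(y))$ is continuous by Lemma \ref{lem:PropPhi}. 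For fixed $(t,y)$ with $\psi(t,y)>0$, the measure $\eta(t,y,\cdot)$ is a convex combination of Dirac masses whose weights $\gl^e_{r_e l}(t,(q_e,\rho_e))/\psi(t,y)$, $l\ne r_e$, sum to one thanks to $\gl^e_{ii}=\sum_{j\ne i}\gl^e_{ij}$, hence it is a probability measure; on the measurable set $\{\psi=0\}$, from which no jump is ever triggered, I fix $\eta$ to be an arbitrary probability kernel. Since every atom of $\eta(t,y,\cdot)$ differs from $y$ in a discrete coordinate we have $\eta(t,y,\{y\})=0$, and $(t,y)\mapsto\eta(t,y,B)$ is $\cB([0,T])\otimes\cE$-measurable because the weights are continuous and $x\mapsto\epsilon_x(B)=\Ind_B(x)$ is measurable.

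Next I would build the marked point process. Iterating algorithm \ref{alg:thinning_one_queue_proc} from $T_0=0$, $Y_0=(r_0,q_0,\rho_0)$ produces a sequence $((T_n,Y_n),n\in\N_0)$ whose conditional laws are exactly \eqref{eq:ProbDistTn}--\eqref{eq:ProbDistYn} by Theorem \ref{thm:DistributionMPPLoadDependent}, so the counting measure $\mu=\sum_{n,\,T_n<\infty}\epsilon_{(T_n,Y_n)}$ satisfies statements 1 and 2 of that theorem; it remains to see that $\mu$ is stable. This is exactly where the uniform bound \eqref{eq:AssumptionRates} is used: the candidate times generated inside the while-loops of algorithm \ref{alg:thinning_one_queue_proc} are the arrival times of a homogeneous Poisson process of rate $\overline{\gl}$, the accepted times $T_n$ form a sub-sequence of them, hence $\#\{n:T_n\le T\}$ is dominated by a Poisson random variable with parameter $\overline{\gl}T$ and is $P$-a.s.\ finite; call $\cN\in\cA$ the corresponding $P$-null explosion set. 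Theorem \ref{thm:PDMPJacobsen2006} now applies and shows that $X(t)=\phi_{T_{N(t)}t}(Y_{N(t)})$ with $X(0)=x_0$ is a piecewise deterministic Markov process on \OAP{} with respect to its natural filtration, which gives conditions 1 and 2 of Definition \ref{def:MLDPNM} (note $X(0)=\phi_{00}(Y_0)=Y_0=x_0$ by Lemma \ref{lem:PropPhi}). Condition 4 is then read off directly: on $[T_k,T_{k+1})$ one has $N(t)=k$, so $X(t)=\phi_{T_k t}(Y_k)=(\vec{r}(T_k),S^{\mu(\vec{r}(T_k))}_{T_k t}(\vec{q}(T_k),\vec{\rho}(T_k)))$ by \eqref{eq:LoadDepPhiNet}, and on $\cN^c$ there are finitely many jump times, so one may take $M=N(T)+1$ and $T_M=T$.

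The condition I expect to cost the most care is the infinitesimal description, condition 3. Conditioning on $X(t)=y$ with $y=(\vec{r},\vec{q},\vec{\rho})$ and using the Markov property of $X$ together with the jump mechanism encoded by \eqref{eq:ProbDistTn}--\eqref{eq:ProbDistYn}, the time to the next jump has survival function $\exp(-\int_t^{t+s}\psi(\tau,\phi_{t\tau}(y))\,d\tau)$, which equals $1-\psi(t,y)\,s+\lano(s)$ because $\psi$ is continuous along the flow; that jump lands in coordinate $e$ at value $j\ne r^e$ with probability $\gl^e_{r^e j}(t,q^e,\rho^e)/\psi(t,y)$; and the probability of two or more jumps in $[t,t+\Delta t]$ is $\lanO(\Delta t^2)=\lano(\Delta t)$. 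Multiplying the total rate $\psi(t,y)$ by the per-mark probability yields the coordinate-wise transition rate $\gl^e_{r^e j}(t,q^e,\rho^e)$ for $j\ne r^e$, while the exit rate from $r^e$ is $\sum_{j\ne r^e}\gl^e_{r^e j}(t,q^e,\rho^e)=\gl^e_{r^e r^e}(t,q^e,\rho^e)$ and the deterministic flow does not alter $\vec{r}$; assembling these yields precisely the expansion required in condition 3. The main obstacle is thus the careful bookkeeping of the $\lano(\Delta t)$ terms --- the exclusion of multiple jumps and the variation of $\psi$ and of the $\gl^e_{ij}$ along the deterministic trajectory over a short time window --- and the need to anchor the expansion at an arbitrary time $t$ rather than only at the jump times $T_n$, which is where the Markov property supplied by Theorem \ref{thm:PDMPJacobsen2006} is essential. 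Everything else follows from the continuity of the rates and from Lemma \ref{lem:PropPhi}; the only genuinely structural input is that $E$ is a Borel space, which holds because $D$ is Polish by Remark \ref{rem:PolishSpace} and $E$ is its product with a finite set.
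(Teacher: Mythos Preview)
Your proposal is correct and follows essentially the same route as the paper: verify the hypotheses of Theorem~\ref{thm:PDMPJacobsen2006} for the triple $(\phi,\psi,\eta)$, build the marked point process with the thinning algorithm, use Poisson domination for stability, read off conditions 1, 2 and 4 of Definition~\ref{def:MLDPNM}, and obtain condition 3 by a one-jump/no-jump expansion anchored at an arbitrary time via the Markov property. The paper carries out the condition-3 computation in full, in particular the explicit evaluation of $\eta(t,y,B)\psi(t,y)$ for the cylinder set $B$ singling out coordinate $\tilde e$, and it invokes the Polish structure of $E$ to pass to a canonical coordinate process started at $y$; your sketch identifies exactly these ingredients and the $\lano(\Delta t)$ bookkeeping as the place where care is needed.
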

\begin{proof}
We use theorem \ref{thm:PDMPJacobsen2006} to show the existence. Let $\phi$ be as in \eqref{eq:LoadDepPhiNet}; then, from lemma \ref{lem:PropPhi}, we deduce the assumptions on $\phi$ that we need for theorem \ref{thm:PDMPJacobsen2006}. By defining $\psi$ and $\eta$ via \eqref{eq:LoadDependentPsiNet} and \eqref{eq:LoadDependentEtaNet}, we can use assumption \eqref{eq:AssumptionRates} and conclude the integrability
\[\int_t^{t+h} \psi(\tau,\phi_{t,\tau}(y))d\tau \leq \overline{\lambda} h.\] 
Since the rate functions $\gl^e_{ij}$ are measurable and since $i,j \in \{1,\dots,C^e\}$ is a finite set, the mappings $(t,y) \mapsto \psi(t,y)$ and $(t,y) \mapsto \eta(t,y,B)$ are measurable. We deduce $\eta(t,y,\{y\}) = 0$ from $\gl^e_{ii} = \sum_{j = 1, j\neq i}^C \gl^e_{ij}$, and with $\eta(t,y,E) = 1$, we easily see that $(y,B) \mapsto \eta(t,y,B)$ defines a Markovian kernel.\\
Let $((T_n,Y_n),n \in \N)$ be constructed with algorithm \ref{alg:thinning_one_queue_proc}. The corresponding counting measure is stable since, by thinning, the number of jumps is less than the number of jumps of the Poisson process with rate $\overline{\gl}$, which is stable; see \cite{Jacobsen2006}. Statements 1.\ and 2.\ of theorem \ref{thm:DistributionMPPLoadDependent} are satisfied by construction, and $(X(t), t \in [0,T])$ with 
\begin{align*}
X(t) = \phi_{T_{N(t)}t}(Y_{N(t)}) \text{ with }X(0) = Y_0 = x_0, \text{ and } N(t) = \inf\{n \in \N \colon T_n \leq t < T_{n+1}\},
\end{align*}
is a piecewise deterministic Markov process with respect to the natural filtration by theorem \ref{thm:PDMPJacobsen2006}. Thus, $(X(t), t \in [0,T])$ satisfies conditions 1., 2.\ and 4.\ of definition \ref{def:MLDPNM}. It remains to show condition 3. The Markov property and the fact that the state space is a Polish space allows us to construct a canonical coordinate process $(\tilde{X}(t), t \in [0,T])$ on some probability space $(\tilde{\Omega},\tilde{\cA},\tilde{P})$, which starts in $y \in E$ having the same finite dimensional distributions like $(X(t), t \in [0,T])$. 
Let $\tilde{e} \in \cC$ and $j \in \{1,\dots,C^{\tilde{e}}\}$; then, 
\begin{align*}
P(r^{\tilde{e}}(t+\Delta t) = j|X(t) = y) = \tilde{P}(\tilde{r}^{\tilde{e}}(\Delta t) = j|\tilde{X}(0) = y)
\end{align*}
for every $y \in E$ and $(\tilde{X}(t), t \in [0,T])$, the canonical coordinate process starting in $y = (r,q,\rho)$ with 
\begin{align*}
\tilde{\psi}(\tau,y) &= \psi(t+\tau,y) \text{ and }\\[1ex]
\tilde{\eta}(\tau,y,B) &= \eta(t+\tau,y,B).
\end{align*}

We compute
\begin{align*}
\tilde{P}(\tilde{r}^{\tilde{e}}(\Delta t) = j|\tilde{X}(0) = y) &= \tilde{P}(\tilde{r}^{\tilde{e}}(\Delta t) = j, \tilde{T}_1 \leq \Delta t | \tilde{X}(0) = y)\\[1ex]
+&\tilde{P}(\tilde{r}^{\tilde{e}}(\Delta t) = j | \tilde{T}_1 > \Delta t ,\tilde{X}(0) = y)P(\tilde{T}_1 > \Delta t | \tilde{X}(0) = y)
\end{align*}
with $\tilde{T}_1$ the first jump time of the process $(\tilde{X}(t), t \in [0,T])$. The function 
\[f(\tau) = \sum_{e = 1}^N \gl_{r_e r_e}^e(\tau,S_{t \tau}^{r}(q,\rho))\]
is continuous with the assumptions, and lemma \ref{lem:PropPhi} can be applied such that
\begin{align*}
P(\tilde{T}_1 \leq \Delta t| \tilde{X}(0) = y) &= 1-e^{-\int_t^{t+\Delta t} f(\tau) d\tau}\\[1ex]
	&= \Delta t f(t) + \lano(\Delta t).
\end{align*}
Therefore, it holds that
\begin{align*}
&\;\tilde{P}(\tilde{r}^{\tilde{e}}(\Delta t) = j | \tilde{T}_1 > \Delta t ,\tilde{X}(0) = y)P(\tilde{T}_1 > \Delta t | \tilde{X}(0) = y) \\[1ex]
=&\; (1-\Delta t f(t)) \Ind_j(r) +\lano(\Delta t) \\[1ex]
=&\; (1-\Delta t \psi(t,y)) \Ind_j(r) + \lano(\Delta t)
\end{align*}
as $ \Delta t \to 0$.
We set 
\begin{align*}
B = \bigtimes_{i = 1}^{\tilde{e}-1} \{1,\dots,C^i\} \times \{ j \} \times \bigtimes_{i = \tilde{e}+1}^{N} \{1,\dots,C^i\} \times D
\end{align*}
and compute
\begin{align*}
P(\tilde{r}(\Delta t) = j,\tilde{T}_1 \leq \Delta t |\tilde{X}(0) = y)
= P(\tilde{Y}_1 \in B,\tilde{T}_1 \leq \Delta t| \tilde{X}(0) = y)+\lano(\Delta t).
\end{align*}
This is valid because the probability of strictly more than one jump in $[0,\Delta t]$ is of order $\lano(\Delta t)$. Due to the continuity of $f$ and $\gl$, we can again calculate
\begin{align*}
&\; P(\tilde{Y}_1 \in B,\tilde{T}_1 \leq \Delta t| \tilde{X}(0) = y)\\
=&\; \int_0^{\Delta t} \tilde{\eta}(s,S^r_{t t+s}(q,\rho), B) \frac{d}{ds} (1-e^{-\int_0^s f(t+\tau) d\tau}) ds\\
=&\; \int_0^{\Delta t} \eta(t+s,S^r_{t t+s}(q,\rho), B) \psi(t,y) e^{-\int_0^s f(t+\tau) d\tau}) ds \\[1ex]
=&\; \eta(t,y,B) \psi(t,y) \Delta t + \lano(\Delta t).
\end{align*}
In the following, we compute $\eta(t,y,B) \psi(t,y)$ in several steps. For fixed $e \in \{1,\dots,N\}$ and $l \in \{1,\dots,C^e\}$ with $l\neq r_e$, we obtain 
\begin{align*}
\epsilon_{(r_1,\dots,r_{e-1},l,r_{e+1},\dots,r_N,\vec{q},\vec{\rho})}(B) = \epsilon_l(\{j\}) \Ind_e(\tilde{e}) + \epsilon_{r_{\tilde{e}}}(\{j\}) (1-\Ind_e(\tilde{e}))
\end{align*}
from the structure of the set $B$. Hence,
\begin{align*}
&\; \sum_{\substack{l = 1\\l \neq r_e}}^{C^e} \gl^e_{r_e l}(t,(q_e,\rho_e))\epsilon_{(r_1,\dots,r_{e-1},l,r_{e+1},\dots,r_N,\vec{q},\vec{\rho})}(B)\\
=&\; \Ind_e(\tilde{e})  (1-\Ind_{r_e}(j)) \gl^e_{r_e j}+(1-\Ind_e(\tilde{e}))+\epsilon_{r_{\tilde{e}}}(\{j\})(1-\Ind_e(\tilde{e})) \gl_{r_e r_e}^e
\end{align*}
from $\sum_{l = 1, l \neq r_e}^{C^e} \gl^e_{r_e l} = \gl_{r_e r_e}^e$ by assumption. Summing over $e$ leads to
\begin{align*}
&\; \eta(t,y,B) \psi(t,y)\\[1ex]
=&\; \sum_{e=1}^N \sum_{\substack{l = 1\\ l \neq r_e}}^{C^e} \gl^e_{r_e l}(t,(q_e,\rho_e))\epsilon_{(r_1,\dots,r_{e-1},l,r_{e+1},\dots,r_N,\vec{q},\vec{\rho})}(B)\\
=&\;  (1-\Ind_{r_{\tilde{e}}}(j)) \gl^{\tilde{e}}_{r_{\tilde{e}} j} + \Ind_{r_{\tilde{e}}}(j)\sum_{\substack{e = 1\\ e\neq \tilde{e}}}^N \gl_{r_e r_e}^e\\
=&\;  (1-\Ind_{r_{\tilde{e}}}(j)) \gl^{\tilde{e}}_{r_{\tilde{e}} j} + \Ind_{r_{\tilde{e}}}(j)(\psi(t,y)-\gl^{\tilde{e}}_{r_{\tilde{e}} r_{\tilde{e}}}).
\end{align*}
Consequently, we end up with
\begin{align*}
\tilde{P}(\tilde{r}^{\tilde{e}}(\Delta t) = j|\tilde{X}(0) = y) =&\; (1-\Ind_{r_{\tilde{e}}}(j)) \Delta t \gl^{\tilde{e}}_{r_{\tilde{e}} j} \\[1ex]
&\;+ \Ind_{r_{\tilde{e}}}(j)\Delta t (\psi(t,y)-\gl^{\tilde{e}}_{r_{\tilde{e}}  r_{\tilde{e}}}) \\[1ex]
&\;+\Ind_{r_{\tilde{e}}}(j)(1-\Delta t \psi(t,y)) + \lano(\Delta t)\\[1ex]
=&\; (1-\Ind_{r_{\tilde{e}}}(j)) \Delta t \gl^{\tilde{e}}_{r_{\tilde{e}} j} + \Ind_{r_{\tilde{e}}}(j)(1-\Delta t \gl^{\tilde{e}}_{r_{\tilde{e}}  r_{\tilde{e}}})  + \lano(\Delta t)
\end{align*}
and finish the proof.
\end{proof}

Before we numerically analyze the MLDPNM in the subsequent section, we give some comments on this model. If we assume $\rho^e_0 \in \operatorname{BV}((a^e,b^e))$, we can use property 6.\ of theorem \ref{thm:ExistenceUniquenessDetProd} and conclude from algorithm \ref{alg:thinning_one_queue_proc} with \eqref{eq:LoadDependentPsiNet}-\eqref{eq:LoadDependentEtaNet} that $\rho^e(t) \in \operatorname{BV}((a^e,b^e))$ $P$-a.s. Hence, there exists a set $\cN \in \cA$ such that \[t\mapsto S_{T_k(\go) t}^{\mu(r(T_k(\go),\go))}(q(T_k(\go),\go),\rho(T_k(\go),\go))\]
is a network solution on $[T_k(\go),T_{k+1}(\go))$ for all $k$ and $\go \in \gO \setminus \cN$.
We claimed the uniform upper bound on the transition rates in equation \eqref{eq:AssumptionRates}, which is quite restrictive. From the deterministic production network model, we deduce formally the subsequent bounds on the solution:
\begin{enumerate}
\item $q^e(t) \leq q_e^{\text{max}}$ with 
\begin{align*}
q_e^{\text{max}} &=
\begin{cases}
q_0^e + \int_0^T G_{\text{in}}^{s(e)}(t) dt &\text{ if } s(e) \in V_{\text{in}},\\[1ex]
q_0^e + \sum_{\tilde{e} \in \delta_{s(e)}^-} \mu_{\tilde{e}}^{\text{max}} \int_0^T A^{s(e),e}(t) dt &\text{ otherwise,} 
\end{cases} \\
\mu_e^{\text{max}} &= \max_{r = 1,\dots,C^e} \mu^e(r)
\end{align*}
\item if $v^e \rho^e_0 (x)\leq \mu^e$, we have $v^e \rho^e(x,t) \leq \mu^e$, and in general, 
$$\rho^e(x,t) \leq \max \Big\{\|\rho_0^e\|_{L^\infty((a,b))}, \frac{\mu_e^{\text{max}}}{v^e}\Big\}.$$
\end{enumerate}
We obtain these bounds $P$-a.s.\ for the MLDPNM, which allows us to relax the uniform bound on the transition rates as follows: there exist $0<\overline{\gl^e} <\infty$, such that
\begin{align}
 \sup\Big\{\gl^e_{ij}(t,q,\rho) \colon &i,j =1,\dots,C^e,\;t \in [0,T],\; (q,\rho) \in \R_{\geq 0}\times L^1((a^e,b^e)); \notag \\ &q \leq q_e^{\text{max}}, \; 0\leq \rho \leq \max\Big\{\|\rho_0^e\|_{L^\infty},\frac{\mu_e^{\text{max}}}{v^e}\Big\} \Big\} \leq \overline{\gl^e} \label{eq:AssumptionRatesNetwork2}
\end{align}
for every $e = 1,\dots, N$.

\section{Numerical Treatment and Computational Results}\label{sec:NumTreat}
As proposed in \cite{GoettlichKnapp2017,GoettlichMartinSickenberger}, we use the fact that a PDE-ODE system can be solved deterministically between the random switching times in the sense of a piecewise deterministic process~\cite{Davis1984}. 

Therefore, we have to address two issues to state a suitable numerical approximation for a MLDPNM: first, the generation of the marked point process $((T_n,Y_n),n \in \N_0)$ and second, the approximation of the deterministic PDE-ODE system between the random times.

The generation of the marked point process $((T_n,Y_n),n \in \N_0)$ is straightforward with algorithm \ref{alg:thinning_one_queue_proc} and the numerical approximation of the deterministic evolution between the jump times can be performed with the scheme presented in \cite{GoettlichMartinSickenberger}.
Therein, the queue lengths are approximated with a forward Euler method and the densities are approximated with a left-sided upwind scheme and coupled by the boundary conditions to an approximation to the network solution.

In the following, we introduce reasonable choices for the load-dependent rate functions $\gl_{ij}^e$ and, in a second step, we analyze the impact of the load-dependency of the production network model on performance measures

\subsection{Utilization Ratio and Work-In-Progress} \label{subsec:UR_RWIP}
At the beginning, the load-dependent model was motivated by a load-dependent probability of machine failures. 
Mathematically, the load-dependent probabilities are included by load-dependent transition rate functions, which we specify in the following.
Let $X$ be a MLDPNM and we assume a network consisting of a single queue-processor unit for simplicity here. Then, we define the Utilization Ratio $\operatorname{UR}$ by 
\begin{align*}
\operatorname{UR} \colon E &\to [0,1] \\
X(t) &\mapsto \frac{1}{\mu^{\text{max}}(b-a)} \int_a^b \min\{\mu(r(t)),v\rho(x,t)\}dx,
\end{align*}
which corresponds to the average production flow compared to the maximal production flow of the machine at time $t$. A meaningful assumption is the dependence of capacity drops given by the rates $\gl$ on the Utilization Ratio of the machine. Specifically, if there is no production ($\operatorname{UR}= 0$), there is no reason for a breakdown and only machine care has to be done with a rate $\gl^{\text{down,min}} \geq 0$. If the production is at the capacity limit ($\operatorname{UR} = 1$), we expect a higher probability of a machine failure caused by high abrasion.
In the case of two possible capacities, i.e., $C = 2$ and $\mu(1) = \mu^{\text{min}}, \mu(2) = \mu^{\text{max}}$, a simple relationship is given by the linear relation
\begin{align}
\gl_{21}(q(t),\rho(t)) = \gl^{\text{down,min}}+(\gl^{\text{down,max}}-\gl^{\text{down,min}})\operatorname{UR}(2,q(t),\rho(t)), \label{eq:LoadDepRate1}
\end{align}
where $\gl^{\text{down,max}} > 0$ is the rate of a machine failure in the case of a production at maximal capacity. A simple calculation shows 
\begin{align*}
|\gl_{21}(q,\rho)-\gl_{21}(\tilde{q},\tilde{\rho})| \leq \frac{(\gl^{\text{down,max}}-\gl^{\text{down,min}})v}{\mu^{\text{max}}(b-a)} \|\rho-\tilde{\rho}\|_{L^1((a,b))}
\end{align*}
and implies the Lipschitz continuity of this rate function.\\
It remains to define a dependence of the production on the rate function in the case of a repair. One possible dependence can be given by the Ratio of Work In Progress and the maximal amount of goods in the machine $\operatorname{RWIP}$, which we define by 
\begin{align*}
\operatorname{RWIP} \colon E &\to [0,1]\\
X(t) &\mapsto \frac{v}{\mu^{\text{max}}(b-a)}\int_a^b \rho(x,t)dx.
\end{align*}
In the case of a low work in progress, the machine can easily be repaired: there are fewer goods that have to be removed for repairs. With a high work in progress, this takes more time. Let $\gl^{\text{rep,max}}>0$ be the repair rate in the case of an empty system, and let $\gl^{\text{rep,min}}\geq 0$ be the repair rate for a full machine such that $\gl^{\text{rep,max}} \geq \gl^{\text{rep,min}}$. Again, the simplest relation is the linear relation 
\begin{align}
\gl_{12}(q(t),\rho(t)) = \gl^{\text{rep,max}}-(\gl^{\text{rep,max}}-\gl^{\text{rep,min}})\operatorname{RWIP}(1,q(t),\rho(t)).
\label{eq:LoadDepRate2}
\end{align}
This rate function is again Lipschitz continuous with
\begin{align*}
|\gl_{12}(q,\rho)-\gl_{12}(\tilde{q},\tilde{\rho})| \leq \frac{v(\gl^{\text{rep,max}}-\gl^{\text{rep,min}})}{\mu^{\text{max}}(b-a)} \|\rho-\tilde{\rho}\|_{L^1((a,b))}.
\end{align*}
Provided $v\rho_0 \leq \mu^{\text{max}}$, the choice 
\begin{align*}
\overline{\gl} = \max\{\gl^{\text{down}},\gl^{\text{rep,max}}\}
\end{align*}
is a uniform bound on the transition rates and can be used for the thinning algorithm \ref{alg:thinning_one_queue_proc}.
All these ideas can be extended to more capacity states and to general network topologies in a straightforward way.

\subsection{Performance Measures}\label{subsec:PerformanceMeasures}
To evaluate the performance of the production network and its numerical behavior,
performance measures need to be defined. For our purposes, these are
the mean queue-load, the mean outflow and the distribution of the random variables. 
According to \cite{GoettlichKnapp2017,GoettlichMartinSickenberger}, the cumulative sum of all queue-lengths and the cumulative outflow until time $t\geq 0$ are given by 
\begin{align*}
q^{\text{net}}(t) = \sum_{e\in \cC} \int_{t_0}^t q^e(s) ds, \quad G^{\text{net}}_{\text{out}}(t) &= \int_{0}^t \sum_{v \in V_{\text{out}}} \sum_{e \in \delta^-_v} f^e(s,\rho^e(b^e,s)) ds,
\end{align*}
where $V_{\text{out}} = \{v \in \cV \colon \delta^+_v = \emptyset\}$. 
Since the above measures are random variables with unknown distributions, some estimators (e.g.,\ moments) are required. We denote as common by $\overline{Y}$ the mean value estimator of $Y \in \{G^{\text{net}}_{\text{out}}(t),q^{\text{net}}(t)\}$. 
The strong convergence of these estimators follows directly from the law of large numbers, i.e., $G^{\text{net}}_{\text{out}}(t), q^{\text{net}}(t) $ are finite for finite time horizons. 

\subsection{Comparison of Load-dependent with Load-independent Model}
\label{subsec:Comparison}
We study a production network model, which topology is given by the diamond network in figure \ref{fig:DiamondNetworkTopology}. Here, $\alpha_1,\alpha_2 \in [0,1]$ are two distribution parameters, i.e., a percentage of $\alpha_1$ is fed from processor one into queue two ($A^{1,2}(t) = \ga_1$), $1-\alpha_1$ from one to three ($A^{1,3} = 1-\ga_1$) and the same for $\alpha_2$ from processor two to queue five and $1-\ga_2$ to queue four. In our numerical investigations we set the distribution parameters fixed with the values $\ga_1 = \ga_2 = 0.5$.
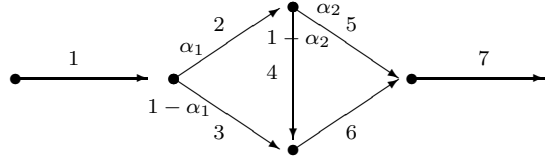
\begin{figure}[H]
\centering
\begin{picture}(285,70)(+19,-45)
\put(60,0){\vector(1,0){50}}
\put(60,0){\circle*{4}}
\put(80,5){\footnotesize{$1$}}

\put(120,0){\vector(3,2){40}}
\put(120,0){\circle*{4}}
\put(135,18){\footnotesize{$2$}}
\put(122,10){\footnotesize{$\alpha_1$}}

\put(120,0){\vector(3,-2){40}}
\put(120,0){\circle*{4}}
\put(135,-23){\footnotesize{$3$}}
\put(110,-13){\footnotesize{$1-\alpha_1$}}

\put(165,27){\vector(0,-1){50}}
\put(165,27){\circle*{4}}
\put(155,0){\footnotesize{$4$}}

\put(165,27){\vector(3,-2){40}}
\put(165,27){\circle*{4}}
\put(185,18){\footnotesize{$5$}}
\put(174,25){\footnotesize{$\alpha_2$}}

\put(165,-27){\vector(3,2){40}}
\put(165,-27){\circle*{4}}
\put(185,-23){\footnotesize{$6$}}
\put(155,13){\footnotesize{$1-\alpha_2$}}

\put(210,0){\vector(1,0){50}}
\put(210,0){\circle*{4}}
\put(235,5){\footnotesize{$7$}}
\end{picture}
\caption{Diamond network with seven processors}
\label{fig:DiamondNetworkTopology} 
\end{figure}

All processors have the same lengths $l^e = b^e-a^e = 1$ with $a^e = 0$ and they share the same processing velocities $v^e = 1$, the same capacity states, i.e., $C^e = 2$ and the same capacity functions $\mu^e = 2\cdot \Ind_2$. We assume that all processors have the same transition rates, where
\begin{alignat*}{2}
\gl^{\text{down,min}} &= (1-\beta)\gl^{\text{down,ref}} ,\quad \gl^{\text{down,max}} &&= 2 \gl^{\text{down,ref}} -\gl^{\text{down,min}} \text{ and }\\
\gl^{\text{rep,min}} &= (1-\beta)\gl^{\text{rep,ref}} ,\quad \gl^{\text{rep,max}} &&= 2 \gl^{\text{rep,ref}} -\gl^{\text{rep,min}}.
\end{alignat*}
The rates $\gl^{\text{down,ref}}>0$ and $\gl^{\text{rep,ref}}>0$ are reference rates and set to $\gl^{\text{down,ref}} = \frac{1}{0.85}$ and $\gl^{\text{rep,ref}} = \frac{1}{0.15}$ in our simulations. The parameter $\beta  \in [0,1]$ allows to study the influence of the load-dependency in the following. More precisely, the choice $\beta = 0$ leads to the load-independent and the choice $\beta \in (0,1]$ to the load-dependent model. We remark that the case $\beta = 0$ is exactly the model presented in \cite{GoettlichMartinSickenberger}.
The network inflow is given by $G_{\text{in}}^{1}(t) = 1.5$, which is lower than the mean capacity in the load-independent case, i.e., the (stationary) availability of a processor is given by $\frac{\gl^{\text{rep,ref}}}{\gl^{\text{down,ref}}+\gl^{\text{rep,ref}}} = 0.85$ and consequently, the (stationary) expected capacity is $1.7$.

For the numerical approximation we use equidistant grids given by a spatial discretization with $\Delta x = \frac{1}{10}$ and a time discretization with $\Delta t = \Delta x$, which satisfies the CFL condition. 
The following results are based on a simulation run with a sample size of $10^4$, and we always start with an intact and empty production.

Figure \ref{fig:MeanCapacity1} shows the expected capacity with respect to the load-dependency scale $\beta$ for processor 1 and 7. The case $\beta = 0$ is well-known from \cite{GoettlichMartinSickenberger} and reproduced here. In fact, from the intact system with a capacity of $\mu = 2$ at time zero, we observe a convergence towards the stationary expected capacity of $1.7$ for processor 1 and 2.
Since for $\beta>0$ the load influences the capacity, we distinguish the description of processor 1 and 7. We start with the first processor:
in the case $\beta = 1$, we see a strong decreasing mean capacity, which implies a stuck in the system. The stationary expected capacities for $\beta = 0.25$ and $\beta = 0.5$ are approximately $1.57$ and $1.35$, respectively. Consequently, the influence of the load-dependency on the mean capacity is quite high.

For processor 7 we observe a similar trend regarding the stationary capacities but they are less affected by the load-dependency. In fact, for $\beta  = 1$, we observe an expected capacity of 2 in the first periods until the first products reach the seventh machine.

The different expected capacities allow the assumption that the expected queue lengths are also affected by the workload-dependency, which is confirmed by figure \ref{fig:MeanQueueLoads1}. By increasing the load-dependency, the expected queue-length increases in this example. The expected queue-length of the first processor increases slowly for $\beta \in \{0,0.25,0.5\}$, moderately for $\beta = 0.75$ and very strongly for $\beta = 1$. Although the last processor 7 shares the same properties of all other processors, the expected queue-length is much smaller compared to the first processor queue-lengths. This is reasonable, since the products get stuck in the production as $\beta$ increases.
\begin{figure}[H]
\begin{subfigure}[c]{0.5\textwidth}
\centering
\includegraphics[width=0.9\textwidth]{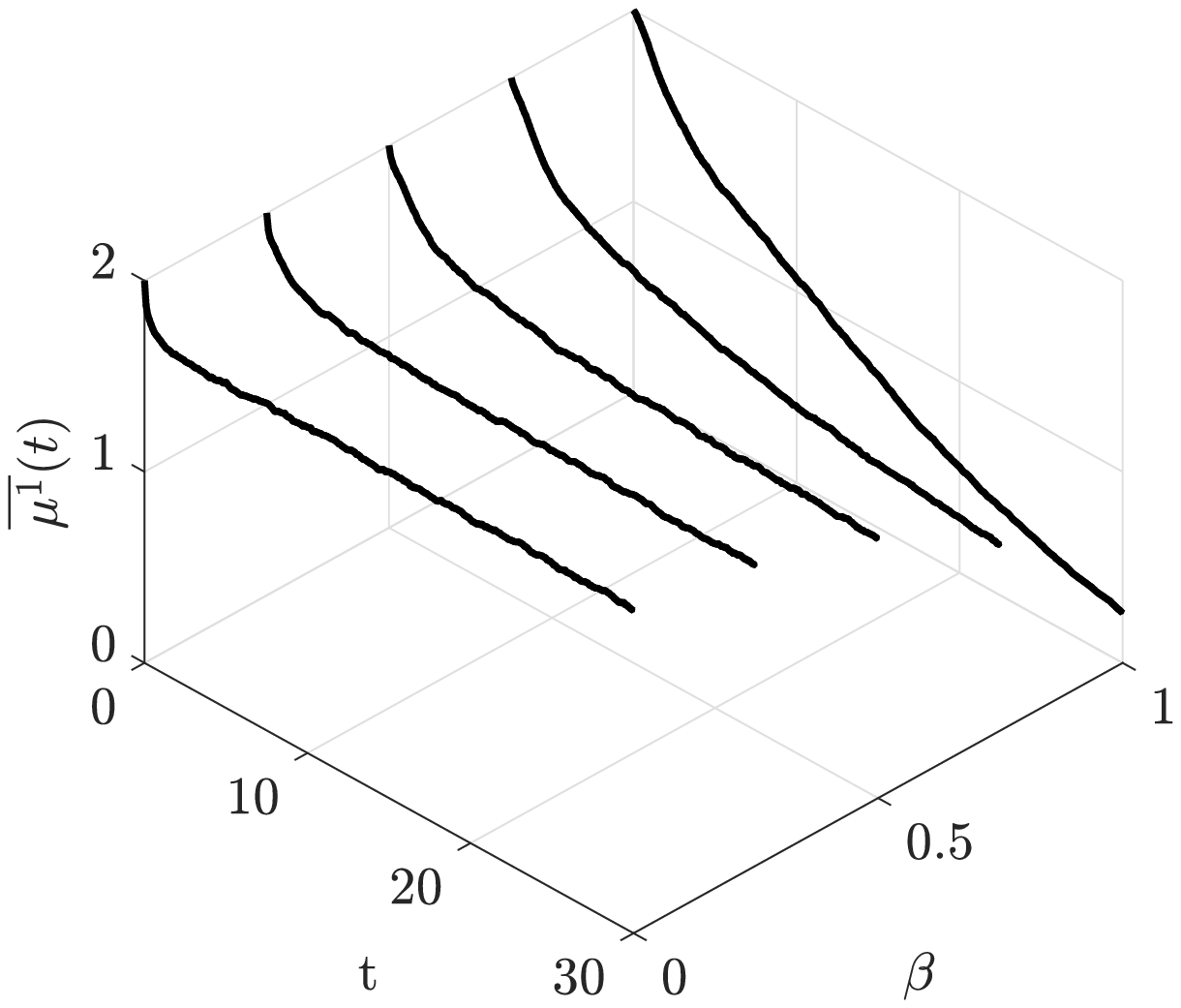}
\subcaption{Processor 1}
\end{subfigure}
\begin{subfigure}[c]{0.5\textwidth}
\centering
\includegraphics[width=0.9\textwidth]{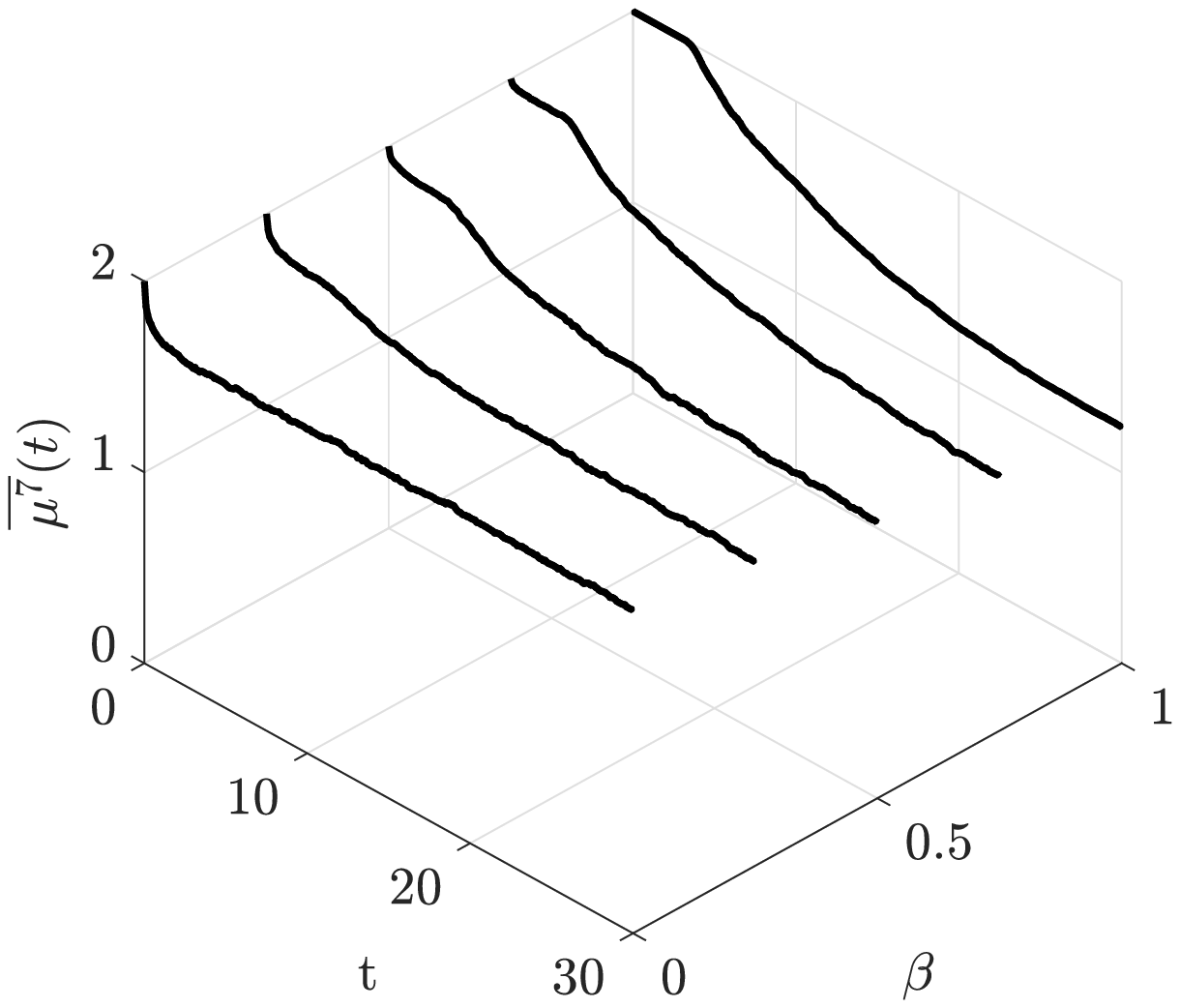}
\subcaption{Processor 7}
\end{subfigure}
\caption{Mean capacity}
\label{fig:MeanCapacity1}
\end{figure}
\begin{figure}[H]
\begin{subfigure}[c]{0.5\textwidth}
\centering
\includegraphics[width=0.9\textwidth]{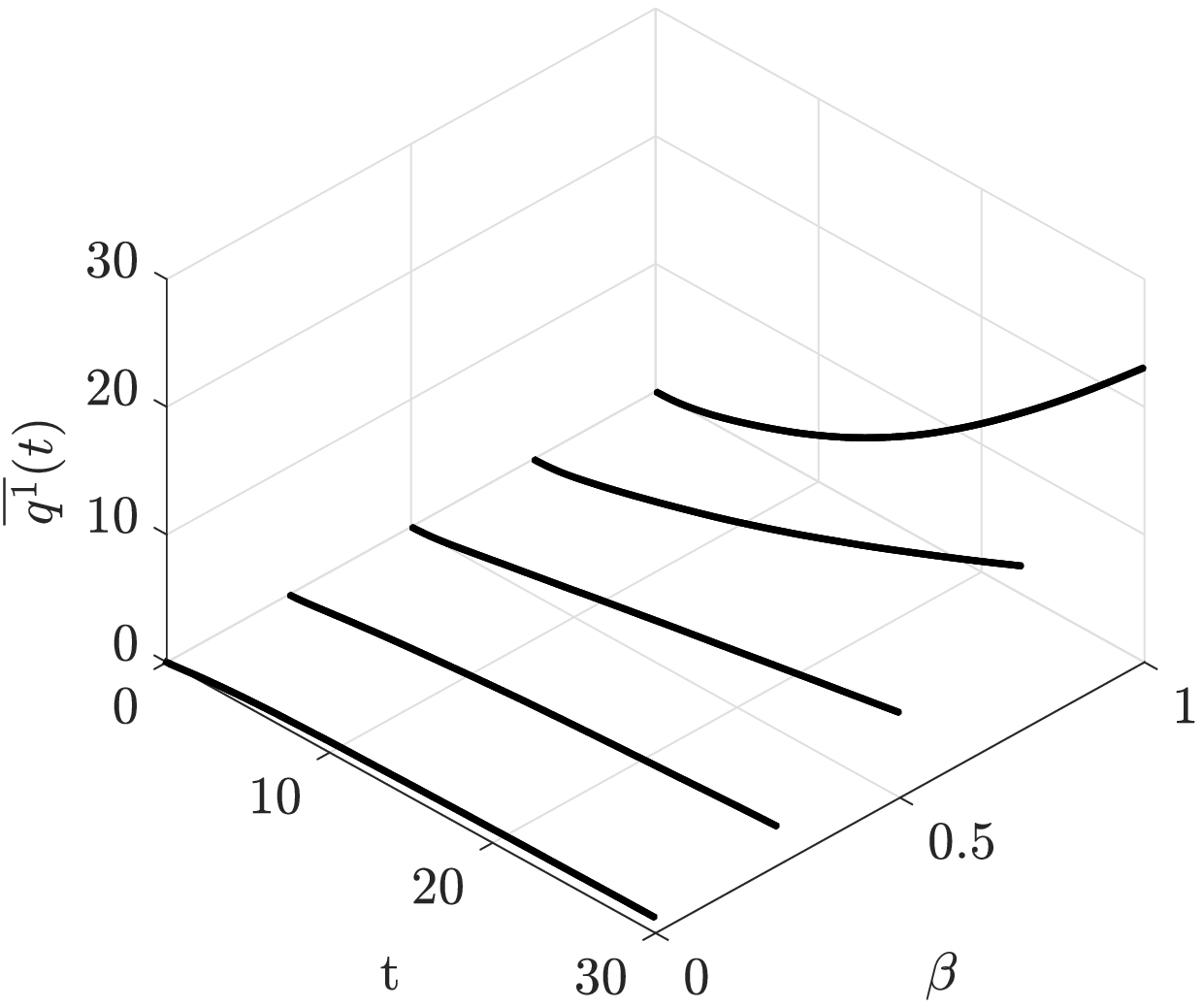}
\subcaption{Processor 1}
\end{subfigure}
\begin{subfigure}[c]{0.5\textwidth}
\centering
\includegraphics[width=0.9\textwidth]{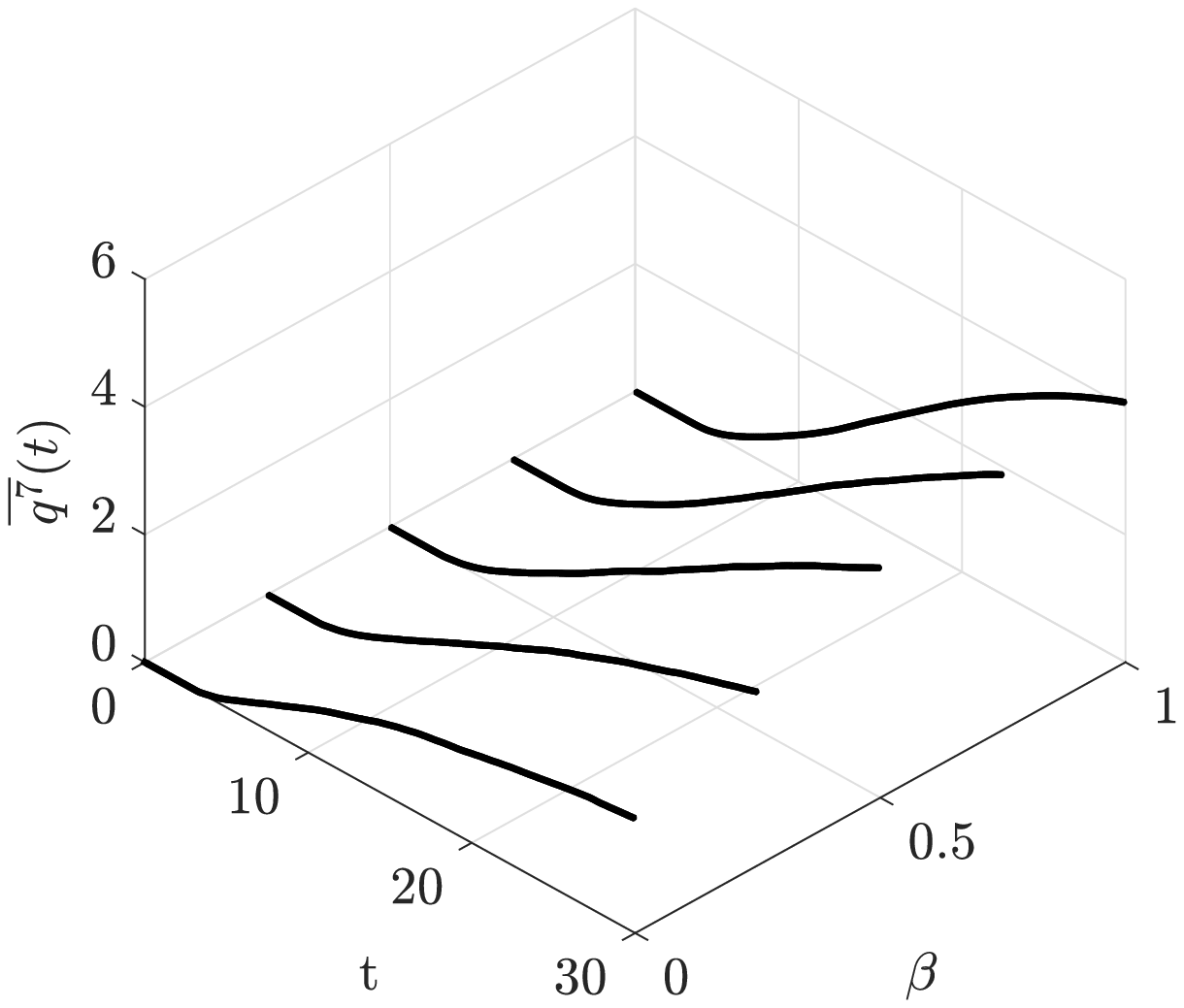}
\subcaption{Processor 7}
\end{subfigure}
\caption{Mean queue-lengths}
\label{fig:MeanQueueLoads1}
\end{figure}
If we evaluate the expected accumulated network queue-lengths, i.e.\ the sum of all queue-lengths until time $t=30$ and vary $\beta$, we see the significant influence of the load-dependency in figure \ref{fig:MeanNetworkQueuesAndOutflow1} (a). In the same manner, the expected accumulated network outflow decreases as $\beta$ increases, see figure \ref{fig:MeanNetworkQueuesAndOutflow1} (b).

The expected values contain no detailed information about the probability distribution of the accumulated network queue-lengths and outflow. Therefore, we consider a corresponding histogram in figure \ref{fig:NetworkQueuesAndOutflowHist} (a) and (b). On the $x$-axis, we have the parameter-value, on the $y$-axis the load-dependency $\beta$ and the color of the squares indicate the relative frequency. We deduce the increasing mean of the accumulated network queue-lengths again but we also observe the distortion of the probability distribution as $\beta$ changes. In detail, for $\beta = 0$ the distribution is more concentrated at the expected value than for $\beta>0$. Especially, the variance increases, which is reasonable by the following arguments: if a capacity drop happens, the probability that a capacity drop happens again after the repair is higher, since the processor is fed with the maximal capacity of the filled queue. On the other hand, if the machine is not affected by a capacity drop, the machine is emptied when the previous machine breaks down and consequently all subsequent machines are less affected by capacity drops. Summarizing together, the load-dependency increases the network dependency and consequently the variance of the network queue-lengths.
The same holds true for the network outflow, see figure \ref{fig:NetworkQueuesAndOutflowHist} (b).
\begin{figure}[H]
\begin{subfigure}[c]{0.5\textwidth}
\centering
\includegraphics[width=0.9\textwidth]{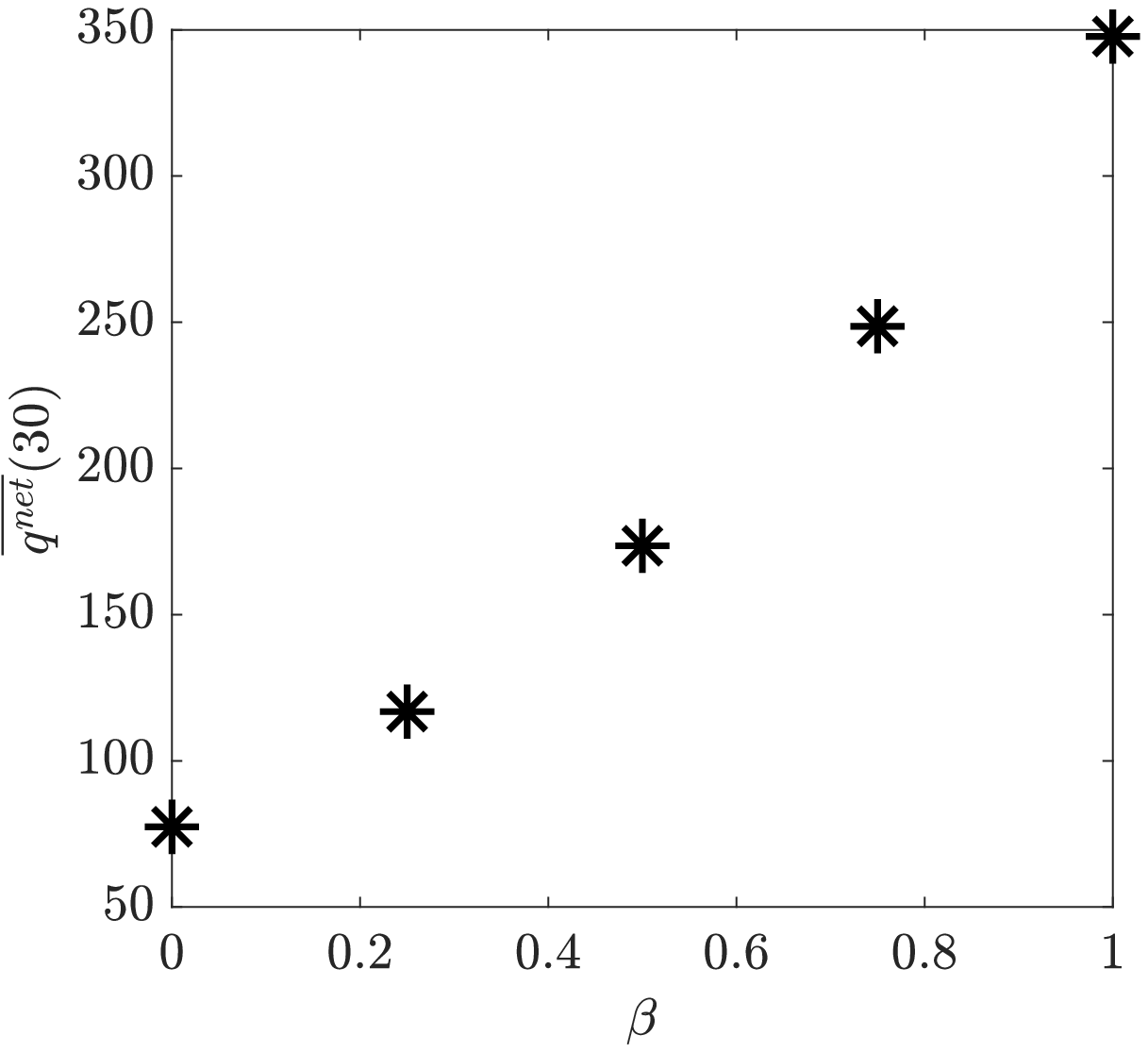}
\subcaption{Accumulated network queue-lengths}
\end{subfigure}
\begin{subfigure}[c]{0.5\textwidth}
\centering
\includegraphics[width=0.9\textwidth]{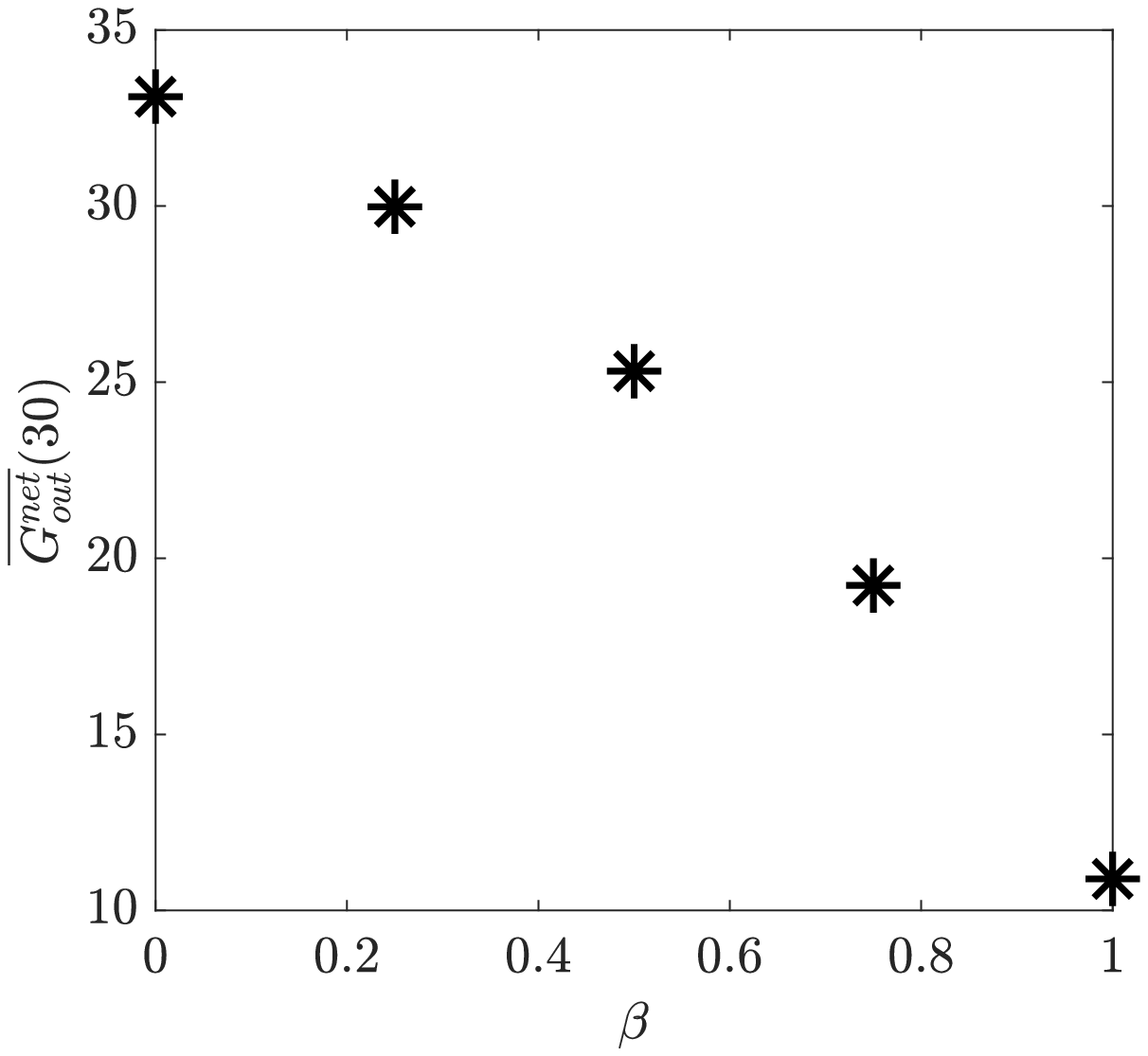}
\subcaption{Accumulated network outflow}
\end{subfigure}
\caption{Mean network measures}
\label{fig:MeanNetworkQueuesAndOutflow1}
\end{figure}
\begin{figure}[H]
\begin{subfigure}[c]{0.5\textwidth}
\centering
\includegraphics[width=0.9\textwidth]{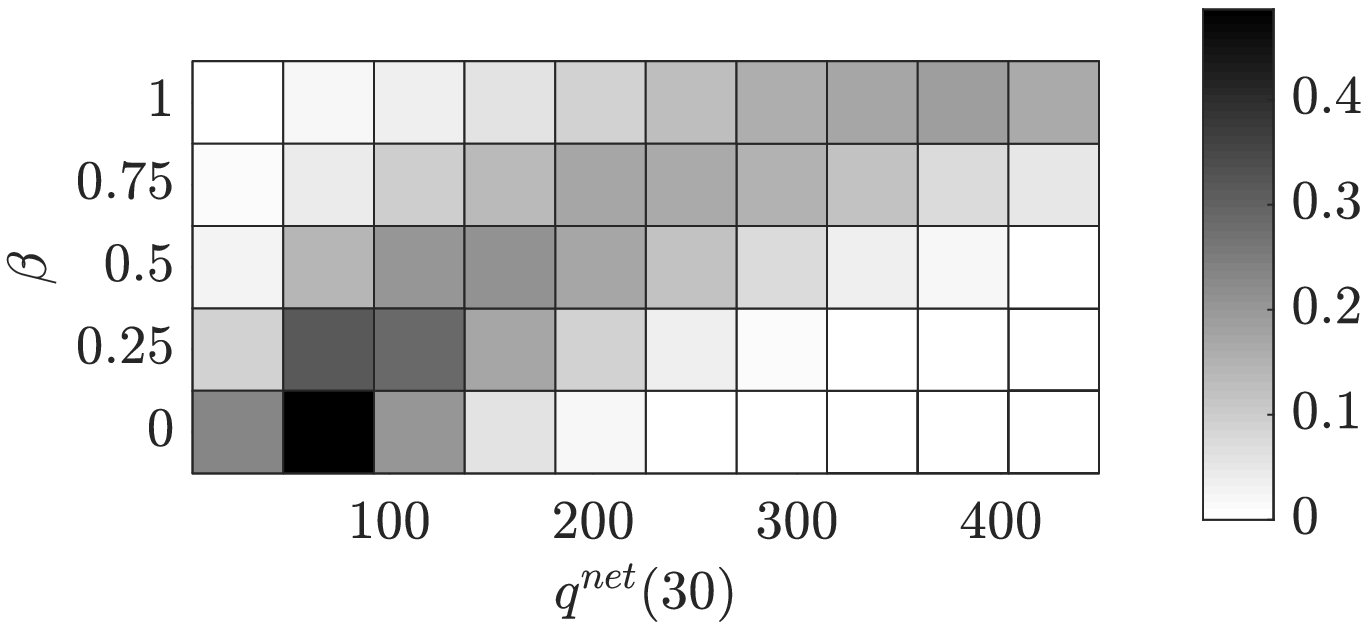}
\subcaption{Accumulated network queue-lengths}
\end{subfigure}
\begin{subfigure}[c]{0.5\textwidth}
\centering
\includegraphics[width=0.9\textwidth]{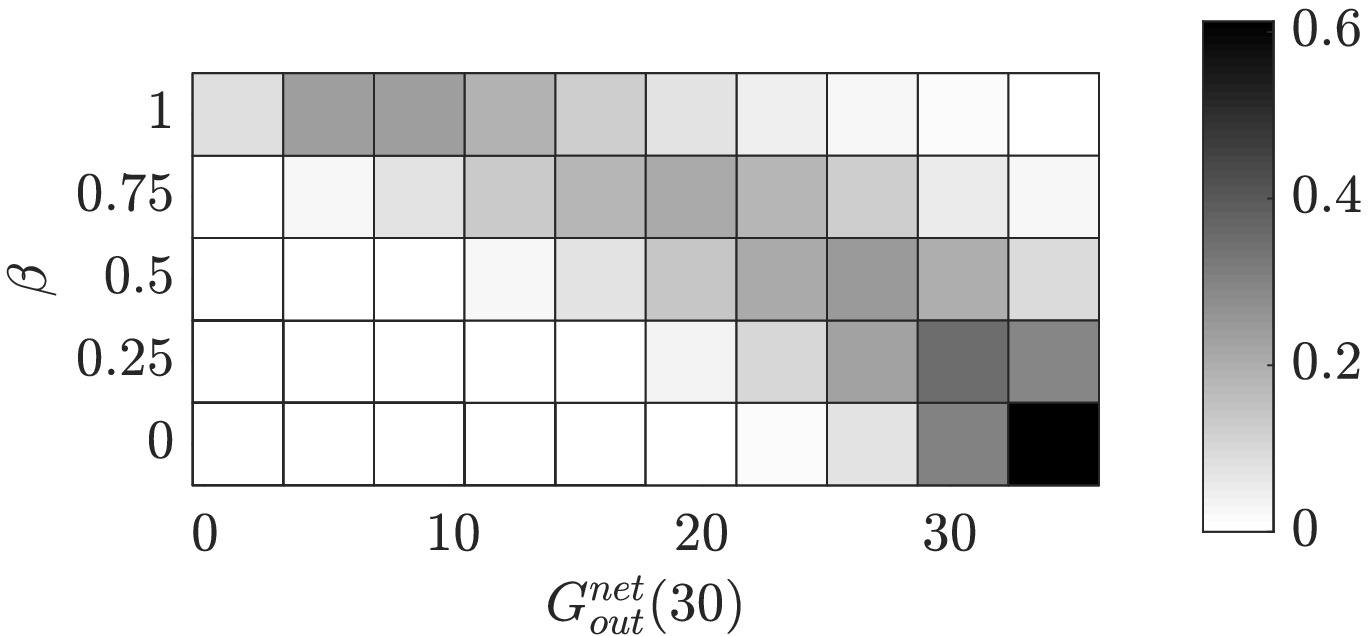}
\subcaption{Accumulated network outflow}
\end{subfigure}
\caption{Histogram of network measures}
\label{fig:NetworkQueuesAndOutflowHist}
\end{figure}
%

In the following, we change the inflow function to $G_{\text{in}}^{1}(t) = 1 \cdot \Ind_{[0,5)}(t)+2 \cdot \Ind_{[10,15)}(t)+0.5 \cdot \Ind_{[20,25)}(t)$
to analyze the transient behavior of the load-dependent model numerically.
Figure \ref{fig:MeanCapacity2} shows the expected capacity of the first and last processor. An increase of the load-dependency $\beta$ reinforces the influence of the expected capacity on the network inflow as one would  expect. Processor 7 is less influenced by the varying inflow than processor 1, which implies that the network exhibits a smoothing effect.

If we set a high load-dependency $\beta = 1$, we observe in figure \ref{fig:MeanQueueLoads2} a high expected queue-length and consequently a stuck of products in the machine. This happens when the inflow jumps to the maximal capacity of 2 , which increases the probability of a capacity drop significantly. The first jump from 0 to 1 inflow causes no stuck in the processor and we conclude the importance of the inflow on the performance of the production network. 
\begin{figure}[H]
\begin{subfigure}[c]{0.5\textwidth}
\centering
\includegraphics[width=0.9\textwidth]{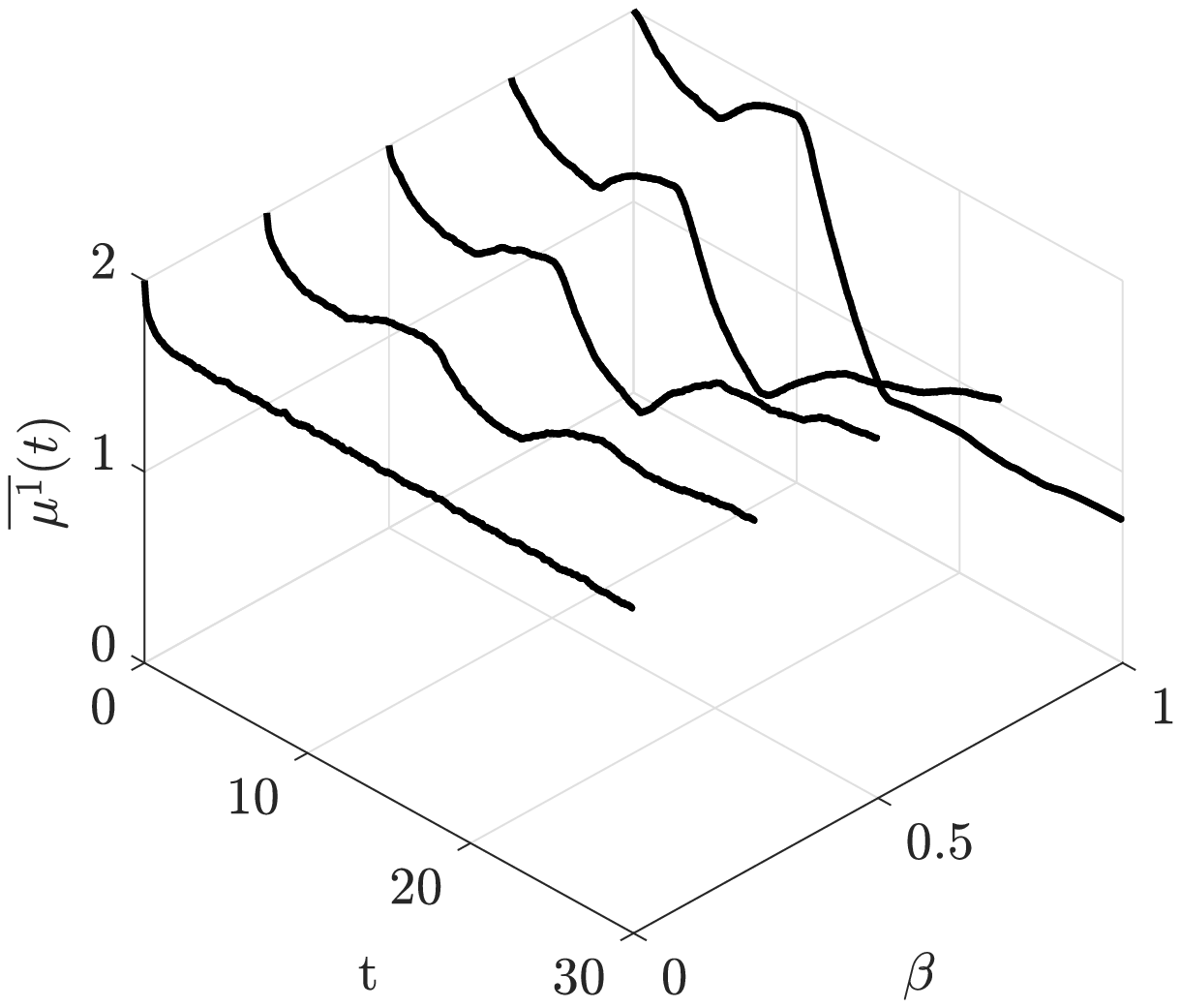}
\subcaption{Processor 1}
\end{subfigure}
\begin{subfigure}[c]{0.5\textwidth}
\centering
\includegraphics[width=0.9\textwidth]{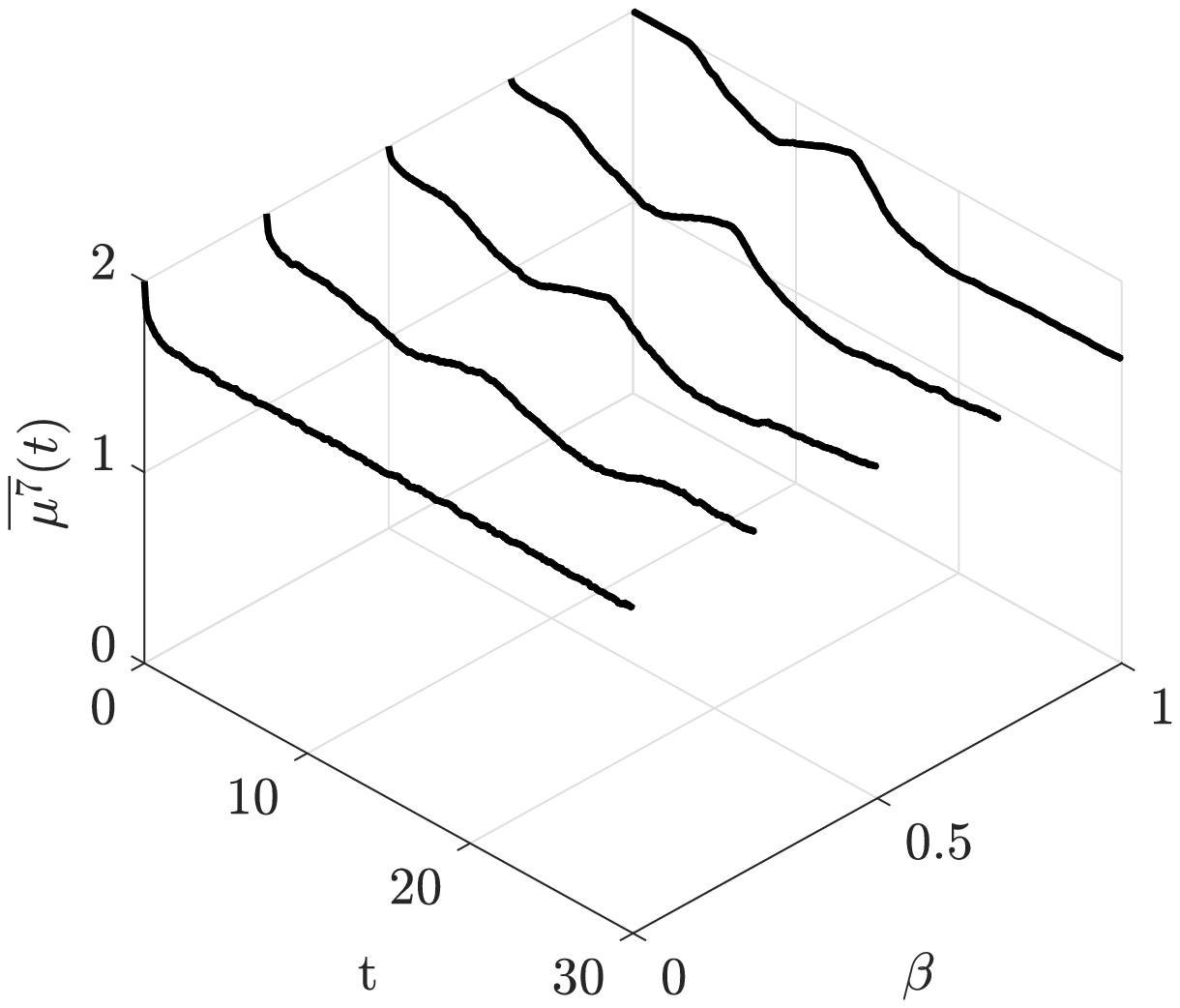}
\subcaption{Processor 7}
\end{subfigure}
\caption{Mean capacity}
\label{fig:MeanCapacity2}
\end{figure}
\begin{figure}[H]
\begin{subfigure}[c]{0.5\textwidth}
\centering
\includegraphics[width=0.9\textwidth]{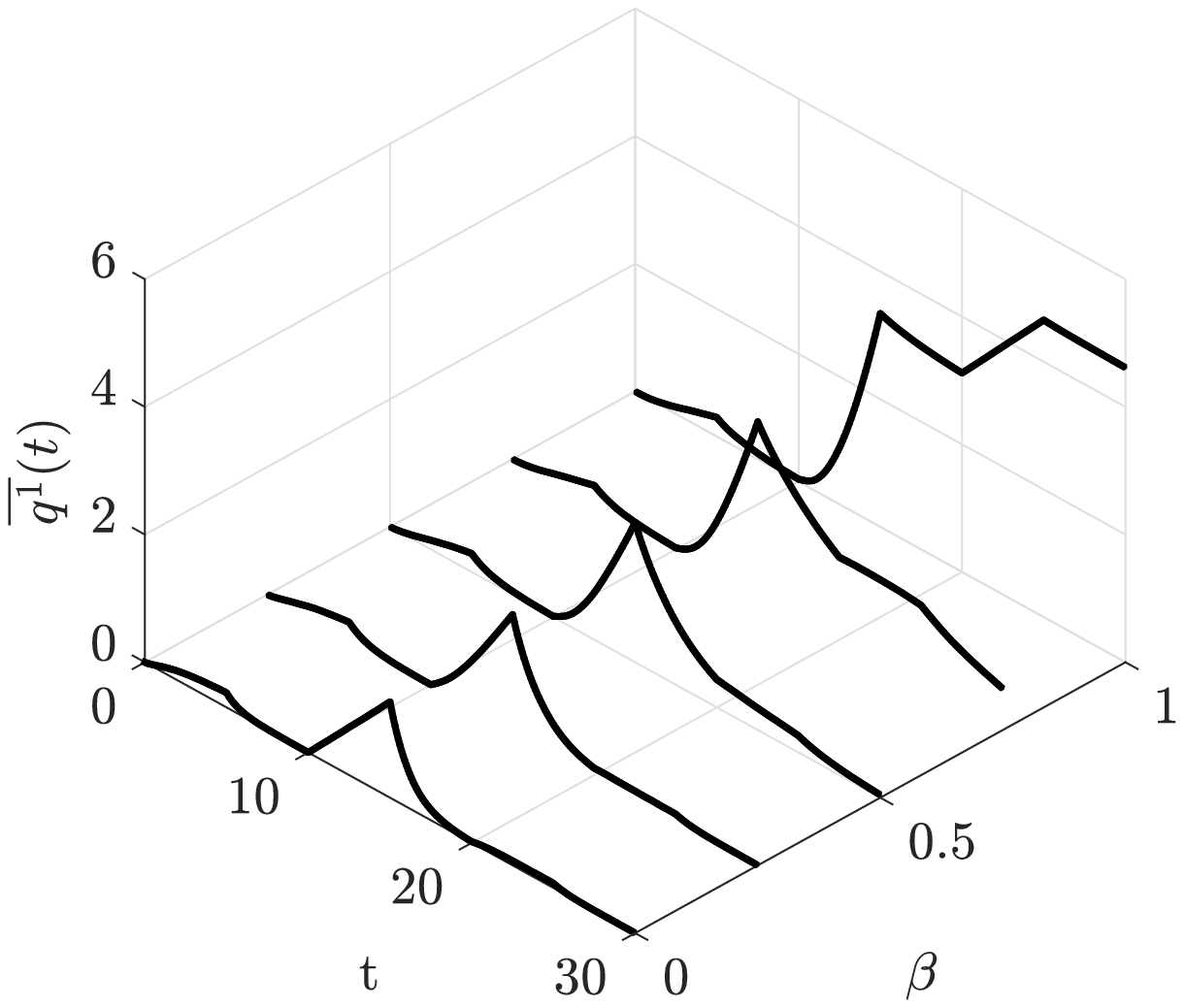}
\subcaption{Processor 1}
\end{subfigure}
\begin{subfigure}[c]{0.5\textwidth}
\centering
\includegraphics[width=0.9\textwidth]{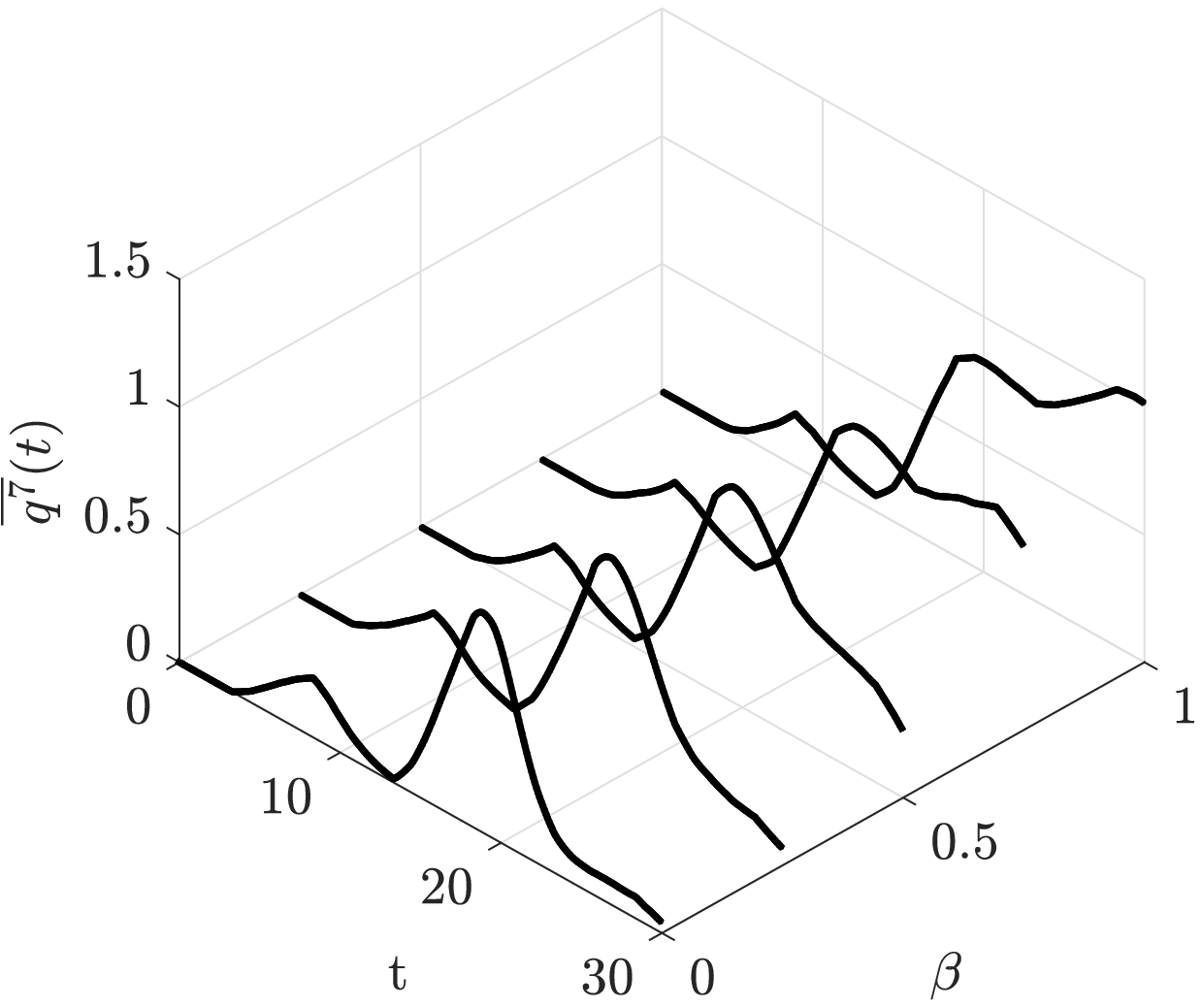}
\subcaption{Processor 7}
\end{subfigure}
\caption{Mean queue-lengths}
\label{fig:MeanQueueLoads2}
\end{figure}
In figure \ref{fig:NetworkQueuesAndOutflowHist2} a histogram of the accumulated network queue-lengths and outflow until time $t = 30$ is shown. The distribution for small $\beta$, i.e.\ $\beta \in \{0,0.25,0.5\}$ is concentrated around the expected value. If $\beta = 1$, the queue-lengths and the outflow distribution is affected by a high variance and consequently totally different and spread. This is exactly the case when the products get stuck in the production and we guess that there is a threshold $\beta = \beta^\ast$ at which the production network is 
unstable in some sense.
\begin{figure}[h]
\begin{subfigure}[c]{0.5\textwidth}
\centering
\includegraphics[width=0.9\textwidth]{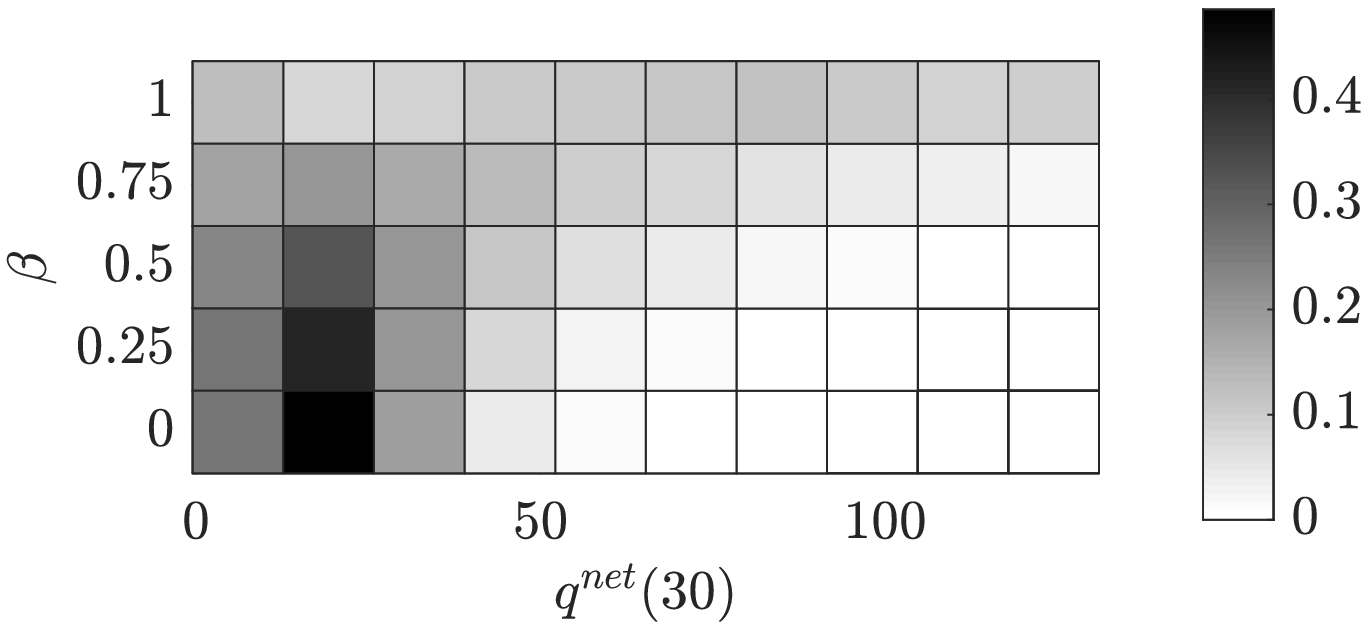}
\subcaption{Accumulated network queue-lengths}
\end{subfigure}
\begin{subfigure}[c]{0.5\textwidth}
\centering
\includegraphics[width=0.9\textwidth]{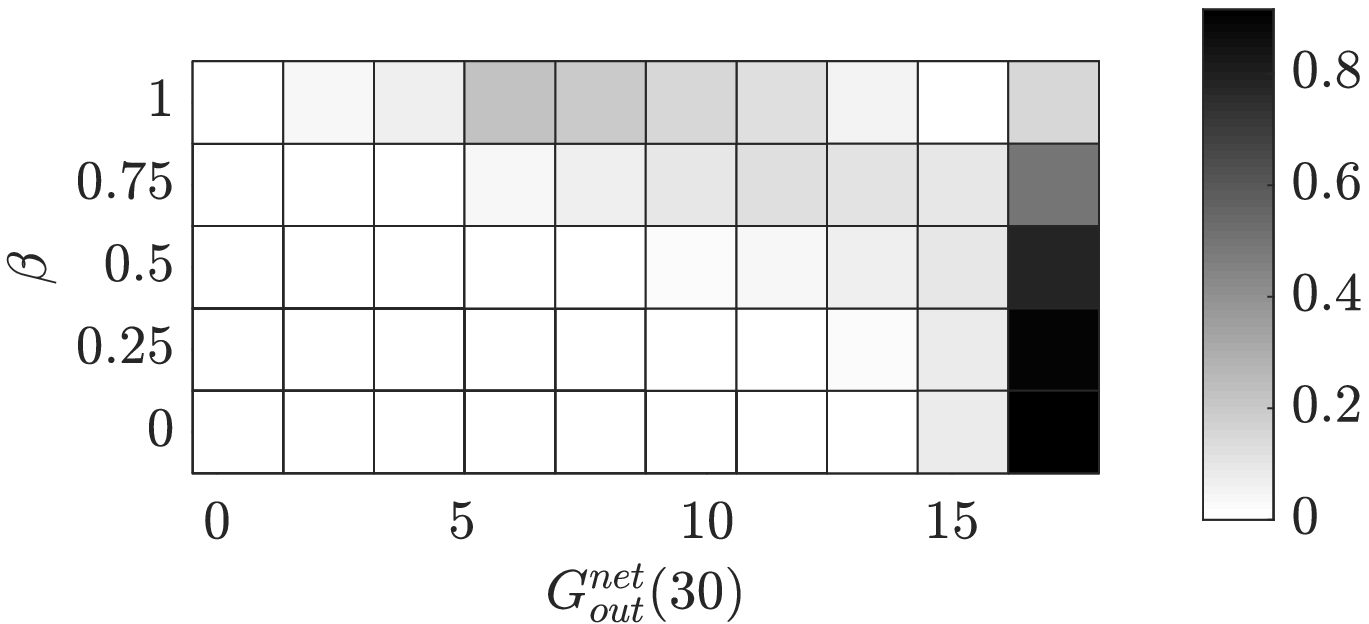}
\subcaption{Accumulated network outflow}
\end{subfigure}
\caption{Histogram of network measures}
\label{fig:NetworkQueuesAndOutflowHist2}
\end{figure}

\section{Conclusions}
\label{sec:conclusions}

We have introduced a load-dependent production network mo{\-}del, which deterministic dynamics is based on a system of coupled PDEs and ODEs. The stochastic effects have been implemented into the model by random capacity functions. Applying the theory of piecewise deterministic Markov processes, we have been able to show the existence of the model by considering an extended solution operator of the deterministic model. The construction of the stochastic production network model directly provides a stochastic simulation algorithm. Together with numerical approximation schemes for PDEs and ODEs, we can simulate sample paths and analyze the load-dependent model with a Monte-Carlo approach. 
The focus of the numerical study is the comparison of the load-independent and load-dependent model, where we observe a big influence on the expected queue-loads and network outflow. Also a distortion of the corresponding probability distributions emphasized the impact of the load-dependency.

We use a standard Monte-Carlo ansatz to evaluate performance measures of the production network model. This could be improved by more advanced Monte-Carlo techniques in future work. Additionally, alternative performance measures such as the profit could be introduced and investigated within an optimization framework.

\appendix


\bibliographystyle{siamplain}
\bibliography{references}
\end{document}